\DeclareRobustCommand{\rvdots}{%
  \vbox{
    \baselineskip4\p@\lineskiplimit\z@
    \kern-\p@
    \hbox{.}\hbox{.}\hbox{.}
  }}
\newcommand{\Mod}[1]{\ (\textup{mod}\ #1)}
\def\moverlay{\mathpalette\mov@rlay}
\def\mov@rlay#1#2{\leavevmode\vtop{%
   \baselineskip\z@skip \lineskiplimit-\maxdimen
   \ialign{\hfil$\m@th#1##$\hfil\cr#2\crcr}}}
\newcommand{\charfusion}[3][\mathord]{
    #1{\ifx#1\mathop\vphantom{#2}\fi
        \mathpalette\mov@rlay{#2\cr#3}
      }
    \ifx#1\mathop\expandafter\displaylimits\fi}
\theoremstyle{plain} %text of this environment is typesetted in italics
\newtheorem{theorem}{\indent\sc Theorem}[section]
\newtheorem{lemma}[theorem]{\indent\sc Lemma}
\newtheorem{corollary}[theorem]{\indent\sc Corollary}
\newtheorem{proposition}[theorem]{\indent\sc Proposition}
\theoremstyle{definition} %text of this environment is typesetted in roman letters
\newtheorem{definition}[theorem]{\indent\sc Definition}
\newtheorem{remark}[theorem]{\indent\sc Remark}
\newtheorem{thmx}{Theorem}
\def\address#1#2{\begingroup
\noindent\parbox[t]{7.8cm}{%
\small{\scshape\ignorespaces#1}\par\vskip1ex
\noindent\small{\itshape E-mail address}%
\/: #2\par\vskip4ex}\hfill%
\endgroup}%
\title{Inverse limits of CM points on certain Shimura varieties}
\author{
\textsc{Ho Yun Jung, Ja Kyung Koo and Dong Hwa Shin} %names of authors
}
\date{} %leave empty
\begin{document}

\allowdisplaybreaks

\maketitle

%%%%%%%%%%%%%%% footnote %%%%%%%%%%%%%%%%
\footnote{ %2020 MSC numbers
2020 \textit{Mathematics Subject Classification}. Primary 11R37; Secondary 11E57, 11G18.}
\footnote{ %key words and phrases
\textit{Key words and phrases}. Class field theory, form class groups, 
modular curves, Shimura varieties.} \footnote{
\thanks{}
}
%%%%%%%%%%%%%%%%%%%%%%%%%%%%%%%%%%%%%%%%%

\begin{abstract}
Let $N$ be a positive integer, and let $D\equiv0$ or $1\Mod{4}$ be a negative integer. 
We define the sets $\mathcal{CM}(D,\,Y_1(N)^\pm)$ and $\mathcal{CM}(D,\,Y(N)^\pm)$ 
as subsets of the Shimura varieties $Y_1(N)^\pm$ and $Y(N)^\pm$, respectively, 
consisting of CM points of discriminant $D$ that are primitive modulo $N$.
By using the theory of definite form class groups, 
we show that the inverse limits
\begin{equation*}
\varprojlim_N\,\mathcal{CM}(D,\,Y_1(N)^\pm)\quad\textrm{and}\quad
\varprojlim_N\,\mathcal{CM}(D,\,Y(N)^\pm)
\end{equation*}
naturally inherit group structures
isomorphic to $\mathrm{Gal}(K^\mathrm{ab}/\mathbb{Q})$ and
$\mathrm{Gal}(K^\mathrm{ab}(t^{1/\infty})/\mathbb{Q}(t))$, respectively,
where $K=\mathbb{Q}(\sqrt{D})$ and $t$ is a transcendental number. 
These results provide an explicit and geometric interpretation of class field theory in terms of
inverse limits of CM points on the associated Shimura varieties.
\end{abstract}

% \tableofcontents
\tableofcontents

\section{Introduction}

Let $\mathbb{H}=\{\tau\in\mathbb{C}~|~\mathrm{Im}(\tau)>0\}$ be the complex upper half-plane.
A congruence subgroup $\Gamma$ of $\mathrm{SL}_2(\mathbb{Z})$,
acting on $\mathbb{H}$ via fractional linear transformations,
defines the modular curve $\Gamma\backslash\mathbb{H} =\{[\tau]:=\Gamma\tau~|~\tau\in\mathbb{H}\}$. 
In particular, for each positive integer $N$, we consider the modular curves 
\begin{equation*}
Y(N)=\Gamma(N)\backslash\mathbb{H}
\quad
\textrm{and}\quad
Y_1(N)=\Gamma_1(N)\backslash\mathbb{H}
\end{equation*}
which parametrize isomorphism classes of complex elliptic curves equipped,
respectively, with
full level $N$ structure and with a point of order $N$
(see \cite[Theorem 1.5.1]{D-S}). Here, 
$\Gamma(N)$ denotes the principal congruence subgroup of level $N$, and
\begin{equation*}
\Gamma_1(N)=\left\{
\gamma\in\mathrm{SL}_2(\mathbb{Z})~\bigg|~\gamma\equiv\begin{bmatrix}1&\mathrm{*}\\0&1\end{bmatrix}
\Mod{N\cdot M_2(\mathbb{Z})}
\right\}.
\end{equation*}
There exists a natural forgetful map
$Y(N)\twoheadrightarrow Y_1(N)$.
These modular curves can be viewed as Riemann surfaces, 
which compactify into smooth projective algebraic curves $X(N)$ and
$X_1(N)$, respectively. 
It is well known that $X(N)$ and $X_1(N)$
are defined over $\mathbb{Q}(\mu_N)$ and $\mathbb{Q}$, respectively, 
where $\mu_N$ denotes the group of $N$th roots of unity (see \cite[Chapters 2 and 7]{D-S}). 
\par
Let $(\mathbb{N},\,\preccurlyeq)$ be the directed partially ordered set of 
positive integers ordered by divisibility\,: for $n,\,m\in\mathbb{N}$, 
we write $n\preccurlyeq m$ if and only if $n\,|\,m$. 
In their seminal work \cite{D-Z}, Daw and Zilber used the theory of canonical models of Shimura varieties to 
describe the projective limit $\varprojlim_{N\in\mathbb{N}} Y(N)$ and its automorphism group. 
A key result of their work is the realization of the Galois group 
$\mathrm{Gal}(\mathbb{Q}(\mathrm{CM})/\mathbb{Q})$
as an open subgroup of the automorphism group of a certain adelic structure,
where $\mathrm{CM}$ denotes the union of all CM points on the modular curves $Y(N)$. 
\par
In contrast, our approach is more explicit. 
Let $\mathbb{H}^-=\{\tau\in\mathbb{C}~|~\mathrm{Im}(\tau)<0\}$ be the lower half-plane.
Rather than considering the full inverse system 
of modular curves $Y(N)$, we restrict our attention to the 
inverse systems consisting of certain CM points on Shimura varieties
\begin{equation*}
Y_1(N)^\pm:=Y_1(N)\sqcup Y_1(N)^-
\quad\textrm{and}\quad
Y(N)^\pm:=Y(N)\sqcup Y(N)^-,
\end{equation*}
where  $Y_1(N)^-:=\Gamma_1(N)\backslash\mathbb{H}^-$ and
$Y(N)^-:=\Gamma(N)\backslash\mathbb{H}^-$.
Define
\begin{equation*}
\widetilde{\mathbb{H}}_1^\pm:=\varprojlim_{N\in\mathbb{N}}Y_1(N)^\pm
\quad\textrm{and}\quad
\widetilde{\mathbb{H}}^\pm:=\varprojlim_{N\in\mathbb{N}}Y(N)^\pm.
\end{equation*}

\begin{thmx}[Theorem \ref{main}]\label{ThmA}
Let $D\equiv0$ or $1\Mod{4}$ be a negative integer, 
and let $K=\mathbb{Q}(\sqrt{D})$ be the 
corresponding imaginary quadratic field. 
Let $t$ be a transcendental number. 
\begin{enumerate}
\item[\textup{(i)}] The subset $\mathcal{CM}(D,\,\widetilde{\mathbb{H}}_1^\pm)$ 
of $\widetilde{\mathbb{H}}_1^\pm$ admits a group structure isomorphic to
$\mathrm{Gal}(K^\mathrm{ab}/\mathbb{Q})$.
\item[\textup{(ii)}] 
Define
\begin{equation*}
K^\mathrm{ab}(t^{1/\infty}):=\bigcup_{N\geq1}K^\mathrm{ab}(t^{1/N}). 
\end{equation*}
The subset
$\mathcal{CM}(D,\,\widetilde{\mathbb{H}}^\pm)$ of 
$\widetilde{\mathbb{H}}^\pm$ can be regarded as a group isomorphic to 
$\mathrm{Gal}(K^\mathrm{ab}(t^{1/\infty})/\mathbb{Q}(t))$.
Moreover, $\mathcal{CM}(D,\,\widetilde{\mathbb{H}}_1^\pm)$ 
can be identified with a quotient group
of $\mathcal{CM}(D,\,\widetilde{\mathbb{H}}^\pm)$
 via the natural forgetful maps. 
\end{enumerate}
\end{thmx}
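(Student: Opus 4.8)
The plan is to reduce the whole statement to the arithmetic of definite form class groups of level $N$ together with the main theorem of complex multiplication (Shimura's reciprocity law), and then to pass to the inverse limit. First I would record the explicit parametrization of CM points: a point of $\mathcal{CM}(D,Y_1(N)^\pm)$ is represented by a root $\tau\in\mathbb{H}\cup\mathbb{H}^-$ of a primitive quadratic form $Q=ax^2+bxy+cy^2$ of discriminant $D$, together with the residual order-$N$ datum, the condition ``primitive modulo $N$'' being exactly what makes this level datum well defined and stable under the transition maps. The involution interchanging $\mathbb{H}$ and $\mathbb{H}^-$ is induced by complex conjugation, and this is the feature that will ultimately produce $\mathrm{Gal}(K^{\mathrm{ab}}/\mathbb{Q})$ rather than merely $\mathrm{Gal}(K^{\mathrm{ab}}/K)$. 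Using the definite form class group of level $N$ developed earlier (in its extended version incorporating both half-planes), I would identify $\mathcal{CM}(D,Y_1(N)^\pm)$ as a set with that class group, with the group law furnished by Dirichlet composition, and then via class field theory with $\mathrm{Gal}(K_{D,N}/\mathbb{Q})$, where $K_{D,N}$ is the field generated over $\mathbb{Q}$ by the coordinates of these CM points.

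Second, I would verify that the group structures are compatible with the forgetful transition maps $Y_1(M)^\pm\twoheadrightarrow Y_1(N)^\pm$ for $N\mid M$: these send primitive CM points to primitive CM points and correspond, on the class-group side, to the natural surjections of form class groups, equivalently to the restriction maps $\mathrm{Gal}(K_{D,M}/\mathbb{Q})\twoheadrightarrow\mathrm{Gal}(K_{D,N}/\mathbb{Q})$. Hence
\[
\mathcal{CM}(D,\widetilde{\mathbb{H}}_1^\pm)=\varprojlim_N\mathcal{CM}(D,Y_1(N)^\pm)\cong\varprojlim_N\mathrm{Gal}(K_{D,N}/\mathbb{Q})\cong\mathrm{Gal}\Bigl(\textstyle\bigcup_N K_{D,N}\Big/\mathbb{Q}\Bigr),
\]
and the remaining point is that the ray class fields attached to the order $\mathcal{O}_D$ of all moduli $N$ exhaust the maximal abelian extension, so that $\bigcup_N K_{D,N}=K^{\mathrm{ab}}$; the case of a non-maximal order needs a brief check that adjoining the conductor still suffices. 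This yields (i).

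For (ii) the additional input is the full level structure carried by points of $Y(N)^\pm$ beyond the cyclic order-$N$ point seen by $Y_1(N)^\pm$; arithmetically this extra coordinate is the Weil pairing, a primitive $N$th root of unity. Rather than specializing it to $\mu_N\subset K^{\mathrm{ab}}$ (which would collapse the extra information), I would keep it generic and model it by $t^{1/N}$ for a transcendental $t$, so that the field generated by a full-level CM system over $\mathbb{Q}(t)$ produces, in the limit, $K^{\mathrm{ab}}(t^{1/\infty})$. The same torsor/form-class analysis, now carrying the full level datum, identifies $\mathcal{CM}(D,Y(N)^\pm)$ with the Galois group of that field over $\mathbb{Q}(t)$, and passing to the limit gives $\mathrm{Gal}(K^{\mathrm{ab}}(t^{1/\infty})/\mathbb{Q}(t))$. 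Since all roots of unity already lie in $K^{\mathrm{ab}}$, Kummer theory shows $\mathrm{Gal}(K^{\mathrm{ab}}(t^{1/\infty})/K^{\mathrm{ab}})\cong\widehat{\mathbb{Z}}$, and the forgetful maps $Y(N)^\pm\twoheadrightarrow Y_1(N)^\pm$, which discard precisely this $\widehat{\mathbb{Z}}$-coordinate, induce the restriction $\mathrm{Gal}(K^{\mathrm{ab}}(t^{1/\infty})/\mathbb{Q}(t))\twoheadrightarrow\mathrm{Gal}(K^{\mathrm{ab}}/\mathbb{Q})$, exhibiting $\mathcal{CM}(D,\widetilde{\mathbb{H}}_1^\pm)$ as the asserted quotient.

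I expect the main obstacle to be twofold, lying in the bookkeeping of the group law rather than in any single hard estimate. First, one must check that the composition law on CM points is genuinely well defined at each finite level and strictly compatible with the transition maps; concretely this is the assertion that Shimura's reciprocity law intertwines Dirichlet composition of forms primitive modulo $N$ with the Artin map uniformly in $N$, including the sign bookkeeping caused by the two half-planes and the action of complex conjugation, so that the inverse limit is a group and not merely a profinite set. Second, for (ii) the delicate step is to justify replacing the Weil-pairing root of unity by the roots $t^{1/N}$ of a transcendental $t$: one must show that the resulting system really does build $K^{\mathrm{ab}}(t^{1/\infty})/\mathbb{Q}(t)$ with the correct extension class, namely that the action of $\mathrm{Gal}(K^{\mathrm{ab}}/\mathbb{Q})$ on the procyclic group of $t$-power roots matches the Galois action on $\widehat{\mathbb{Z}}(1)$ dictated by the level structures, so that the forgetful quotient in the final clause is an isomorphism of group extensions and not only of underlying sets.
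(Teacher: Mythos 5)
Your proposal follows essentially the same route as the paper: identify $\mathcal{CM}(D,Y_1(N)^\pm)$ and $\mathcal{CM}(D,Y(N)^\pm)$ at each finite level with the extended definite form class groups $\mathcal{Q}(D,N)^\pm/\sim_{\Gamma_1(N)}$ and $\mathcal{Q}(D,N)^\pm/\sim_{\Gamma(N)}$, hence with $\mathrm{Gal}(K_{\mathcal{O},N}/\mathbb{Q})$ and $\mathrm{Gal}(K_{\mathcal{O},N}(t^{1/N})/\mathbb{Q}(t))$, check compatibility with the transition maps, and pass to the inverse limit using $\bigcup_N K_{\mathcal{O},N}=K^{\mathrm{ab}}$. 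This matches the paper's proof (Lemma \ref{formpoint}, Corollary \ref{GalFormCor}, Lemma \ref{KONinclusion}), with the "obstacles" you flag being exactly the results the paper imports from the prior work on definite form class groups.
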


For the precise definitions of $\mathcal{CM}(D,\,\widetilde{\mathbb{H}}_1^\pm)$  and
$\mathcal{CM}(D,\,\widetilde{\mathbb{H}}^\pm)$ in Theorem \ref{ThmA}, see Definitions \ref{CMDN} and
\ref{CMDH}. 

\section {Definite form class groups}

We shall present an explicit approach to class field theory 
via definite form class groups, as recently developed by 
Jung et al. in \cite{J-K-S-Y}.
\par
Throughout this paper, let $N$ be a positive integer and
$D$ be a negative integer such that $D\equiv0$ or $1\Mod{4}$.
Define the set $\mathcal{Q}(D,\,N)$ of primitive positive definite binary quadratic forms
of discriminant $D$, subject to an additional condition, as follows\,:
\begin{equation*}
\mathcal{Q}(D,\,N):=\left\{Q\left(\begin{bmatrix}x\\y\end{bmatrix}\right)=
ax^2+bxy+cy^2\in\mathbb{Z}[x,\,y]~\Bigg|~\begin{array}{l}
\gcd(a,\,b,\,c)=1,\\
b^2-4ac=D,~a>0,\\
\gcd(a,\,N)=1\end{array}\right\}.
\end{equation*}
For each $Q(x,\,y)=ax^2+bxy+cy^2\in\mathcal{Q}(D,\,N)$, let $\omega_Q\in\mathbb{H}$ denote the root of the
quadratic polynomial $Q(x,\,1)$, that is,
\begin{equation*}
\omega_Q=\frac{-b+\sqrt{D}}{2a}. 
\end{equation*}
We extend this set by defining
\begin{equation*}
\mathcal{Q}(D,\,N)^\pm:
=\mathcal{Q}(D,\,N)\sqcup\mathcal{Q}(D,\,N)^-,~~
\textrm{where}~
\mathcal{Q}(D,\,N)^-:=\{-Q~|~Q\in\mathcal{Q}(D,\,N)\}.
\end{equation*}
Let $\Gamma$ be either $\Gamma_1(N)$ or $\Gamma(N)$. 
Then $\Gamma$ acts on $\mathcal{Q}(D,\,N)^\pm$ from the right by
\begin{equation*}
Q\left(\begin{bmatrix}x\\y\end{bmatrix}\right)^\gamma=
Q\left(\gamma\begin{bmatrix}x\\y\end{bmatrix}\right)\quad
(Q\in\mathcal{Q}(D,\,N)^\pm,~\gamma\in\Gamma),
\end{equation*}
which induces an equivalence relation $\sim_\Gamma$ defined by\,:
for $Q,\,Q'\in\mathcal{Q}(D,\,N)^\pm$, 
\begin{equation*}
Q\sim_\Gamma Q'\quad\Longleftrightarrow\quad
Q'=Q^\gamma~\textrm{for some}~\gamma\in\Gamma. 
\end{equation*}
Note that the sets $\mathcal{Q}(D,\,N)$ and 
$\mathcal{Q}(D,\,N)^-$
are invariant under this action. 
\par
Unless stated otherwise, let $K$ be the imaginary quadratic field given by $K=\mathbb{Q}(\sqrt{D})$ 
and $\mathcal{O}$ be the order in $K$ of discriminant $D$.  
Let $K_{\mathcal{O},\,N}$ denote the ray class field of $\mathcal{O}$ modulo $N\mathcal{O}$,
originally introduced by 
S\"{o}hngen in \cite{Sohngen} (cf. \cite[$\S$V.9]{Janusz} and \cite[$\S$2]{J-K-S-Y}).
More precisely, define the subgroups of proper fractional $\mathcal{O}$-ideals
\begin{align*}
I(\mathcal{O},\,N)&=\langle\textrm{nonzero proper $\mathcal{O}$-ideals prime to $N$}\rangle,\\
P_1(\mathcal{O},\,N)&=\langle\nu\mathcal{O}~|~\textrm{$\nu$ is a nonzero element of $\mathcal{O}$
such that}~\nu\equiv1\Mod{N\mathcal{O}}\rangle. 
\end{align*}
Let $\mathcal{O}_K$ be the maximal order of $K$, and let $\ell=[\mathcal{O}_K:\mathcal{O}]$ denote the conductor of $\mathcal{O}$. 
Then, by the existence theorem of class field theory, 
there exists a unique abelian extension $K_{\mathcal{O},\,N}$ of $K$ satisfying\,:
\begin{enumerate}
\item[(i)] Every nonzero prime ideal of $\mathcal{O}_K$ that ramifies in $K_{\mathcal{O},\,N}$ divides $\ell N\mathcal{O}_K$\,;
\item[(ii)] The Artin reciprocity map for the modulus $\ell N\mathcal{O}_K$ induces an isomorphism
\begin{equation}\label{IPG}
I(\mathcal{O},\,N)/P_1(\mathcal{O},\,N)\stackrel{\sim}{\rightarrow}
\mathrm{Gal}(K_{\mathcal{O},\,N}/K).
\end{equation}
\end{enumerate}

\begin{proposition}\label{groupstructure}
Let $t$ be a transcendental number. 
\begin{enumerate}
\item[\textup{(i)}] The set $\mathcal{Q}(D,\,N)/\sim_{\Gamma_1(N)}$ of equivalence classes admits a 
group structure via the well-defined bijection 
\begin{equation*}
\mathcal{Q}(D,\,N)/\sim_{\Gamma_1(N)}\,\rightarrow\,I(\mathcal{O},\,N)/P_1(\mathcal{O},\,N),
\quad[Q]\mapsto[\mathbb{Z}\omega_Q+\mathbb{Z}]. 
\end{equation*}
Hence $\mathcal{Q}(D,\,N)/\sim_{\Gamma_1(N)}$ is isomorphic to 
$\mathrm{Gal}(K_{\mathcal{O},\,N}/K)$ via the isomorphism \textup{(\ref{IPG})}. 
\item[\textup{(ii)}] There exists a well-defined bijection
\begin{equation*}
\mathcal{Q}(D,\,N)/\sim_{\Gamma(N)}\,\rightarrow\,\mathrm{Gal}(K_{\mathcal{O},\,N}(t^{1/N})/K(t))
\end{equation*}
which makes the following diagram \textup{(Figure \ref{diagram})} commute\,\textup{:}
\begin{figure}[H]
\begin{equation*}
\xymatrixcolsep{5pc}
\xymatrix{
\mathcal{Q}(D,\,N)/\sim_{\Gamma(N)} \ar@{->}[r]^{}
\ar@{->>}[dd]_{\textrm{the natural surjection}}
 &\mathrm{Gal}(K_{\mathcal{O},\,N}(t^{1/N})/K(t)) \ar@{->>}[dd]^{\textrm{the natural surjection}} \\\\
\mathcal{Q}(D,\,N)/\sim_{\Gamma_1(N)} \ar@{->}[r]^\sim & \mathrm{Gal}(K_{\mathcal{O},\,N}/K)
}
\end{equation*}
\caption{A diagram concerning Galois groups}
\label{diagram}
\end{figure}
\end{enumerate}
\end{proposition}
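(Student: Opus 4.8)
The plan is to establish (i) by showing that the displayed assignment is a well-defined bijection onto $I(\mathcal{O},\,N)/P_1(\mathcal{O},\,N)$, to transport the group law through it, and to compose with the Artin isomorphism \textup{(\ref{IPG})}; then to deduce (ii) by refining this bijection so as to absorb the cyclic quotient $\Gamma_1(N)/\Gamma(N)$ into the Kummer extension $K_{\mathcal{O},\,N}(t^{1/N})/K_{\mathcal{O},\,N}(t)$. For (i) I would first record that $\mathbb{Z}\omega_Q+\mathbb{Z}$ is a proper fractional $\mathcal{O}$-ideal prime to $N$: from $a\omega_Q^2+b\omega_Q+c=0$ one gets $a\omega_Q=(-b+\sqrt{D})/2$, so that $\mathbb{Z}+\mathbb{Z}(a\omega_Q)=\mathcal{O}$, the ring of multipliers of $\mathbb{Z}\omega_Q+\mathbb{Z}$ is exactly $\mathcal{O}$, and the hypothesis $\gcd(a,\,N)=1$ places the resulting class in $I(\mathcal{O},\,N)$.

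Next I would verify that the assignment descends to $\sim_{\Gamma_1(N)}$-classes. For $\gamma=\begin{bmatrix}p&q\\r&s\end{bmatrix}$ a direct computation gives $\omega_{Q^\gamma}=(s\omega_Q-q)/(p-r\omega_Q)$, and since the change-of-basis matrix from $(\omega_Q,\,1)$ to $(s\omega_Q-q,\,p-r\omega_Q)$ is $\gamma^{-1}\in\mathrm{SL}_2(\mathbb{Z})$, one finds $\mathbb{Z}\omega_{Q^\gamma}+\mathbb{Z}=(p-r\omega_Q)^{-1}(\mathbb{Z}\omega_Q+\mathbb{Z})$. It then remains to check that $(p-r\omega_Q)\mathcal{O}\in P_1(\mathcal{O},\,N)$ when $\gamma\in\Gamma_1(N)$: writing $\eta:=a(p-r\omega_Q)=ap-r(a\omega_Q)\in\mathcal{O}$ and using $p\equiv1$, $r\equiv0\Mod{N}$ gives $\eta\equiv a\Mod{N\mathcal{O}}$, so choosing $a'\in\mathbb{Z}$ with $aa'\equiv1\Mod{N}$ yields $\eta a'\equiv aa'\equiv1\Mod{N\mathcal{O}}$ and hence $(p-r\omega_Q)\mathcal{O}=(\eta a'\mathcal{O})(aa'\mathcal{O})^{-1}\in P_1(\mathcal{O},\,N)$.

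Bijectivity is the core of (i). For surjectivity I would invoke the classical form–ideal correspondence underlying \cite{J-K-S-Y}: every class in $I(\mathcal{O},\,N)/P_1(\mathcal{O},\,N)$ contains a proper fractional ideal homothetic to some $\mathbb{Z}\omega+\mathbb{Z}$, and the homothety representative may be chosen so that the associated form has leading coefficient coprime to $N$, after which the congruence computation above matches $P_1$-classes. Injectivity runs that computation in reverse: if $\mathbb{Z}\omega_{Q'}+\mathbb{Z}=\lambda(\mathbb{Z}\omega_Q+\mathbb{Z})$ with $\lambda\mathcal{O}\in P_1(\mathcal{O},\,N)$, then $Q'=Q^\gamma$ for some $\gamma\in\mathrm{SL}_2(\mathbb{Z})$ by the classical correspondence, and the $P_1$-condition, through $\eta\equiv a\Mod{N\mathcal{O}}$ together with $\gcd(a,\,N)=1$, forces the entries of $\gamma$ into the $\Gamma_1(N)$-pattern; I expect this reverse implication to be the main obstacle in (i). Granting the bijection $\Psi$, the group structure transports from $I(\mathcal{O},\,N)/P_1(\mathcal{O},\,N)$, and composing with \textup{(\ref{IPG})} gives the isomorphism to $\mathrm{Gal}(K_{\mathcal{O},\,N}/K)$.

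For (ii), note that $\Gamma(N)\trianglelefteq\Gamma_1(N)$ with $\Gamma_1(N)/\Gamma(N)\cong\mathbb{Z}/N\mathbb{Z}$ detected by the upper-right entry modulo $N$, so that there is a natural surjection $\mathcal{Q}(D,\,N)/\sim_{\Gamma(N)}\twoheadrightarrow\mathcal{Q}(D,\,N)/\sim_{\Gamma_1(N)}$ with fibers of size dividing $N$. On the field side I would use the standard fact that $\mu_N\subseteq K_{\mathcal{O},\,N}$, which makes $K_{\mathcal{O},\,N}(t^{1/N})/K_{\mathcal{O},\,N}(t)$ Kummer with group isomorphic to $\mu_N$, together with $\mathrm{Gal}(K_{\mathcal{O},\,N}(t)/K(t))\cong\mathrm{Gal}(K_{\mathcal{O},\,N}/K)$ (valid since $t$ is transcendental), to obtain a short exact sequence $1\to\mu_N\to\mathrm{Gal}(K_{\mathcal{O},\,N}(t^{1/N})/K(t))\to\mathrm{Gal}(K_{\mathcal{O},\,N}/K)\to1$. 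The bijection of (ii) then refines $\Psi$ by sending a $\Gamma(N)$-class to the automorphism whose restriction to $K_{\mathcal{O},\,N}$ is $\Psi$ of its $\Gamma_1(N)$-class and whose action on $t^{1/N}$ is multiplication by $\zeta_N^{\,j}\in\mu_N$, where $j\in\mathbb{Z}/N\mathbb{Z}$ records the refinement distinguishing a full level structure from a mere point of order $N$. Commutativity of Figure \ref{diagram} then holds by construction, and verifying that $j$ is well defined on $\Gamma(N)$-classes—so that the fibers have size exactly $N$ and the refined map is bijective—is the crux of (ii); this is precisely where the transcendence of $t$ and a coherent choice of $t^{1/N}$ are used to realize the $\mathbb{Z}/N\mathbb{Z}$-torsor as a genuine degree-$N$ Galois layer.
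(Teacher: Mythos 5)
The paper does not actually prove this proposition: both parts are quoted from the companion paper \cite{J-K-S-Y} (Proposition 9.3 and Theorem 9.4 for (i), Corollary 12.6 for (ii)), so what you are really attempting is a from-scratch proof of those cited results. The parts you work out are correct and are the standard computations: that $\mathbb{Z}\omega_Q+\mathbb{Z}$ is a proper fractional $\mathcal{O}$-ideal prime to $N$, the equivariance $\mathbb{Z}\omega_{Q^\gamma}+\mathbb{Z}=(p-r\omega_Q)^{-1}(\mathbb{Z}\omega_Q+\mathbb{Z})$, and the verification that $(p-r\omega_Q)\mathcal{O}\in P_1(\mathcal{O},\,N)$ for $\gamma\in\Gamma_1(N)$ via $\eta=a(p-r\omega_Q)\equiv a\Mod{N\mathcal{O}}$. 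Likewise the field-theoretic skeleton of (ii) (that $\mu_N\subseteq K_{\mathcal{O},\,N}$, the Kummer layer of degree exactly $N$ over $K_{\mathcal{O},\,N}(t)$, and the resulting short exact sequence) is sound.

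The gap is that you explicitly defer the two steps that constitute the actual content of the cited theorems. First, injectivity in (i): from $\mathbb{Z}\omega_{Q'}+\mathbb{Z}=\lambda(\mathbb{Z}\omega_Q+\mathbb{Z})$ with $\lambda\mathcal{O}\in P_1(\mathcal{O},\,N)$ you must produce $\gamma\in\Gamma_1(N)$ with $Q'=Q^\gamma$; but $\lambda$ is determined by the ideal only up to a unit of $\mathcal{O}$ and $\gamma$ only up to the stabilizer of $Q$, and the condition $\lambda\mathcal{O}\in P_1$ constrains \emph{some} generator, not a chosen one. For $D=-3,\,-4$ the unit group is nontrivial and ``forcing the $\Gamma_1(N)$-pattern'' genuinely requires an argument you do not give. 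Second, in (ii) the well-definedness of your exponent $j$ is equivalent to the assertion that every fiber of the natural surjection $\mathcal{Q}(D,\,N)/\sim_{\Gamma(N)}\twoheadrightarrow\mathcal{Q}(D,\,N)/\sim_{\Gamma_1(N)}$ has size exactly $N$, i.e.\ that no nontrivial coset of $\Gamma_1(N)/\Gamma(N)$ stabilizes a $\Gamma(N)$-class of forms; this is again a statement about stabilizers of forms (delicate precisely in the small-discriminant and small-$N$ cases) and is not supplied by anything on the field side. Your appeal to ``the transcendence of $t$ and a coherent choice of $t^{1/N}$'' cannot close this: $t$ only guarantees that the Galois side has a free $\mathbb{Z}/N\mathbb{Z}$ layer, and says nothing about the form side. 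Until these two counting/stabilizer arguments are carried out, the proposal is an outline of the right strategy rather than a proof.
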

\begin{proof}
\begin{enumerate}
\item[(i)] See \cite[Proposition 9.3 and Theorem 9.4]{J-K-S-Y}. 
\item[(ii)] See \cite[Corollary 12.6]{J-K-S-Y}. 
\end{enumerate}
\end{proof}

The field  $K_{\mathcal{O},\,N}$ is further Galois over $\mathbb{Q}$, and we have
a semidirect product decomposition 
\begin{equation}\label{semidirect}
\mathrm{Gal}(K_{\mathcal{O},\,N}/\mathbb{Q})=
\mathrm{Gal}(K_{\mathcal{O},\,N}/K)\rtimes\langle\mathfrak{c}|_{K_{\mathcal{O},\,N}}\rangle
\end{equation}
where $\mathfrak{c}$ denotes the complex conjugation
(see \cite[Lemmas 6.1 and 7.1]{J-K-S-Y25}).  

\begin{corollary}\label{GalFormCor}
Let $t$ be a transcendental number. 
\begin{enumerate}
\item[\textup{(i)}] The set $\mathcal{Q}(D,\,N)^\pm/\sim_{\Gamma_1(N)}$
forms a group isomorphic to $\mathrm{Gal}(K_{\mathcal{O},\,N}/\mathbb{Q})$.
\item[\textup{(ii)}] The set
$\mathcal{Q}(D,\,N)^\pm/\sim_{\Gamma(N)}$
forms a group isomorphic to $\mathrm{Gal}(K_{\mathcal{O},\,N}(t^{1/N})/\mathbb{Q}(t))$.  
\end{enumerate}
\end{corollary}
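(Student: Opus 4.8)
The plan is to derive Corollary \ref{GalFormCor} formally from Proposition \ref{groupstructure} together with the semidirect product decomposition (\ref{semidirect}); the guiding observation is that the extra summand $\mathcal{Q}(D,N)^-$ should match the nontrivial coset of $\mathrm{Gal}(K_{\mathcal{O},N}/K)$ inside $\mathrm{Gal}(K_{\mathcal{O},N}/\mathbb{Q})$, namely the coset of complex conjugation $\mathfrak{c}|_{K_{\mathcal{O},N}}$. I would prove (i) in full and then obtain (ii) by the same argument once the $t$-analogue of (\ref{semidirect}) is in place.

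For (i), I would first record that the two definiteness classes do not interact under $\sim_{\Gamma_1(N)}$: every $\gamma\in\Gamma_1(N)\subset\mathrm{SL}_2(\mathbb{Z})$ preserves the sign of the leading coefficient, and $(-Q)^\gamma=-(Q^\gamma)$, so $\mathcal{Q}(D,N)$ and $\mathcal{Q}(D,N)^-$ are disjoint, each $\sim_{\Gamma_1(N)}$-stable, and $Q\mapsto-Q$ descends to a bijection between their class sets. Consequently
\[
\mathcal{Q}(D,N)^\pm/\!\sim_{\Gamma_1(N)}\;=\;\big(\mathcal{Q}(D,N)/\!\sim_{\Gamma_1(N)}\big)\,\sqcup\,\big(\mathcal{Q}(D,N)^-/\!\sim_{\Gamma_1(N)}\big),
\]
with each class uniquely of the form $[Q]$ or $[-Q]$, $Q\in\mathcal{Q}(D,N)$. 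Letting $\phi$ denote the isomorphism $\mathcal{Q}(D,N)/\!\sim_{\Gamma_1(N)}\xrightarrow{\sim}\mathrm{Gal}(K_{\mathcal{O},N}/K)$ of Proposition \ref{groupstructure}(i), and reading (\ref{semidirect}) as the set partition $\mathrm{Gal}(K_{\mathcal{O},N}/\mathbb{Q})=\mathrm{Gal}(K_{\mathcal{O},N}/K)\sqcup\mathrm{Gal}(K_{\mathcal{O},N}/K)\,\mathfrak{c}|_{K_{\mathcal{O},N}}$, I would set
\[
\Phi([Q])=\phi([Q]),\qquad \Phi([-Q])=\phi([Q])\,\mathfrak{c}|_{K_{\mathcal{O},N}}\qquad(Q\in\mathcal{Q}(D,N)).
\]
Since $-Q\sim_{\Gamma_1(N)}-Q'\Leftrightarrow Q\sim_{\Gamma_1(N)}Q'$, the map $\Phi$ is well defined, and it is a bijection onto $\mathrm{Gal}(K_{\mathcal{O},N}/\mathbb{Q})$. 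Transporting the group law through $\Phi$ gives the asserted group structure; by construction it restricts to the form‑class‑group law on the index‑two subgroup $\mathcal{Q}(D,N)/\!\sim_{\Gamma_1(N)}$, so it is the natural one.

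For (ii) I would repeat this verbatim with $\Gamma(N)$, $K(t)$ and $K_{\mathcal{O},N}(t^{1/N})$ replacing $\Gamma_1(N)$, $K$ and $K_{\mathcal{O},N}$, taking the base bijection from Proposition \ref{groupstructure}(ii). The only new input is the $t$-analogue of (\ref{semidirect}): taking $t$ real and $t^{1/N}$ its positive real root, complex conjugation fixes $t^{1/N}$ and, because $\mu_N\subset K_{\mathcal{O},N}$ makes $K_{\mathcal{O},N}(t^{1/N})/\mathbb{Q}(t)$ Galois, restricts to an order‑two automorphism $\overline{\mathfrak{c}}$ lifting the generator of $\mathrm{Gal}(K(t)/\mathbb{Q}(t))$; this splits the corresponding exact sequence and yields
\[
\mathrm{Gal}\big(K_{\mathcal{O},N}(t^{1/N})/\mathbb{Q}(t)\big)=\mathrm{Gal}\big(K_{\mathcal{O},N}(t^{1/N})/K(t)\big)\rtimes\langle\overline{\mathfrak{c}}\rangle.
\]
With this decomposition the argument of (i) carries over unchanged.

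I expect the main obstacle to be precisely this last decomposition for (ii): one must confirm that $K_{\mathcal{O},N}(t^{1/N})/\mathbb{Q}(t)$ is Galois—which hinges on the inclusion $\mu_N\subset K_{\mathcal{O},N}$—and that complex conjugation provides an order‑two lift, rather than one of larger order entangling the Kummer part $t^{1/N}$. Everything else is formal: disjointness of the two definiteness classes and well‑definedness of $\Phi$ are immediate, and the isomorphism is then a transport of structure. The one conceptual point worth isolating, should one want the identification compatible with the later Galois action on CM points in Theorem \ref{ThmA}, is that $Q\mapsto-Q$ genuinely realizes the complex‑conjugation coset; this is the content of the main theorem of complex multiplication applied to the lattice $\mathbb{Z}\omega_Q+\mathbb{Z}$ and its conjugate.
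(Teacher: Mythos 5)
Your proposal is correct and follows essentially the same route as the paper, which proves the corollary by combining Proposition \ref{groupstructure} with the semidirect product decomposition (\ref{semidirect}) (delegating the details, including the $t^{1/N}$-analogue of that decomposition, to the cited reference \cite[Proposition 7.2]{J-K-S-Y25}). You simply make explicit what the paper leaves to the citation: the matching of $\mathcal{Q}(D,N)^-/\!\sim_\Gamma$ with the complex-conjugation coset and the verification that $K_{\mathcal{O},N}(t^{1/N})/\mathbb{Q}(t)$ is Galois with an order-two lift of $\mathfrak{c}$.
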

\begin{proof}
This follows directly from Proposition \ref{groupstructure} and the decomposition (\ref{semidirect})\,;
see \cite[Proposition 7.2]{J-K-S-Y25}. 
\end{proof}

\begin{remark}\label{MNsurjective}
If $N,\,M\in\mathbb{N}$ with $N\preccurlyeq M$, then 
the natural maps
\begin{align*}
\mathcal{Q}(D,\,N)^\pm/\sim_{\Gamma_1(M)}&\rightarrow\mathcal{Q}(D,\,N)^\pm/\sim_{\Gamma_1(N)},\\
\mathcal{Q}(D,\,N)^\pm/\sim_{\Gamma(M)}&\rightarrow\mathcal{Q}(D,\,N)^\pm/\sim_{\Gamma(N)}
\end{align*}
are surjective, again by Proposition \ref{groupstructure}, the decomposition (\ref{semidirect}) and \cite[Proposition 7.2]{J-K-S-Y25}. 
\end{remark}

\section {Families of CM points on certain Shimura varieties}

Note that the disjoint union $Y(N)^\pm=Y(N)\sqcup Y(N)^-$ is isomorphic to the Shimura variety 
$\mathrm{Sh}(G,\,X)_{\mathcal{K}(N)}$
associated with the Shimura datum $(G,\,X)$, where
\begin{equation*}
G=\mathrm{SL}_2,\quad X=\mathbb{H}^\pm:=\mathbb{H}\sqcup
\mathbb{H}^-,\quad
\mathcal{K}(N)=\mathrm{ker}(\mathrm{SL}_2(\widehat{\mathbb{Z}})
\rightarrow\mathrm{SL}_2(\mathbb{Z}/N\mathbb{Z}))
\end{equation*}
(cf. \cite[$\S$2.2]{D-Z} or \cite[Definition 5.5]{Milne}). More precisely, 
if $\mathbb{A}_f$ denotes the ring 
of finite adeles of $\mathbb{Q}$ with restricted topology, then 
we have the isomorphism of complex manifolds
\begin{equation*}
\mathrm{Sh}(G,\,X)_{\mathcal{K}(N)}=G(\mathbb{Q})\backslash X\times
G(\mathbb{A}_f)/\mathcal{K}(N)\simeq Y(N)^\pm.  
\end{equation*}
Similarly, the space $Y_1(N)^\pm=
Y_1(N)\sqcup Y_1(N)^-$ can also be regarded as a Shimura variety 
associated with the same Shimura datum $(G,\,X)$. 
In this section, we shall construct two inverse systems
\begin{equation*}
\{\mathcal{CM}(D,\,Y_1(N)^\pm\}_{N\in\mathbb{N}}
\quad\textrm{and}\quad
\{\mathcal{CM}(D,\,Y(N)^\pm\}_{N\in\mathbb{N}},
\end{equation*}
where each object consists of certain CM points on
the Shimura varieties $Y_1(N)^\pm$ and $Y(N)^\pm$, respectively. 

\begin{definition}\label{CMDN}
Let $\tau\in\mathbb{H}^\pm$ be a CM point (i.e., $\tau$ is imaginary quadratic), and let
 $ax^2+bx+c\in\mathbb{Z}[x]$ be a primitive quadratic polynomial \textup{(}namely, $\gcd(a,\,b,\,c)=1$\textup{)}
 having $\tau$ as a root.  
\begin{enumerate}
\item[\textup{(i)}] The \textit{discriminant} $D_\tau$ of $\tau$ is defined to be the discriminant of
the polynomial $ax^2+bx+c$, namely, $D_\tau=b^2-4ac$. 
\item[\textup{(ii)}] We say that $\tau$ is \textit{primitive modulo $N$} if $\gcd(a,\,N)=1$.  
\item[\textup{(iii)}] Let $\mathbb{H}(D,\,N)^\pm$ denote the set of all  CM points $\tau\in\mathbb{H}^\pm$ 
such that $D_\tau=D$ and $\tau$ is primitive modulo $N$. 
\end{enumerate}
\end{definition}

\begin{definition}
Define the sets
\begin{align*}
\mathcal{CM}(D,\,Y_1(N)^\pm)&:=\{[\tau]\in Y_1(N)^\pm~|~\tau\in\mathbb{H}(D,\,N)^\pm\},\\
\mathcal{CM}(D,\,Y(N)^\pm)&:=\{[\tau]\in Y(N)^\pm~|~\tau\in\mathbb{H}(D,\,N)^\pm\}.
\end{align*}
\end{definition}

\begin{lemma}\label{formpoint}
Let $\Gamma$ be either $\Gamma_1(N)$ or $\Gamma(N)$. Then the map
\begin{equation}\label{rho}
\rho:\mathcal{Q}(D,\,N)^\pm/\sim_\Gamma\,\rightarrow\,
\mathcal{CM}(D,\,\Gamma\backslash\mathbb{H}^\pm),\quad[Q]\mapsto
\left\{
\begin{array}{ll}[\omega_Q] & \textrm{if}~Q\in\mathcal{Q}(D,\,N),\\
\mathrm{[}\overline{\omega}_{-Q}] & \textrm{if}~Q\in\mathcal{Q}(D,\,N)^-\end{array}\right.
\end{equation}
is a well-defined bijection. 
\end{lemma}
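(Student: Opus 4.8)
The plan is to reduce everything to a single transformation law relating the right action of $\Gamma$ on forms to the left fractional linear action on $\mathbb{H}^\pm$, and then to exploit that a CM point determines its primitive form uniquely. First I would record the identity
\[
\omega_{Q^\gamma}=\gamma^{-1}\!\cdot\!\omega_Q\qquad(Q\in\mathcal{Q}(D,\,N),\ \gamma\in\mathrm{SL}_2(\mathbb{Z})),
\]
which comes from the homogeneity of $Q$: writing $\gamma=\begin{bmatrix}p&q\\r&s\end{bmatrix}$ one has $Q^\gamma\!\left(\begin{bmatrix}z\\1\end{bmatrix}\right)=(rz+s)^2\,Q(\gamma\!\cdot\!z,\,1)$, so the root of $Q^\gamma$ in $\mathbb{H}$ is the unique $z\in\mathbb{H}$ with $\gamma\!\cdot\!z=\omega_Q$. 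Since $\Gamma$ is a group, $\gamma\in\Gamma$ forces $\omega_{Q^\gamma}$ and $\omega_Q$ into the same $\Gamma$-orbit, giving $[\omega_{Q^\gamma}]=[\omega_Q]$ in $\Gamma\backslash\mathbb{H}$.

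Because $\mathcal{Q}(D,\,N)$ and $\mathcal{Q}(D,\,N)^-$ are each $\Gamma$-invariant and map into the two disjoint components $\Gamma\backslash\mathbb{H}$ and $\Gamma\backslash\mathbb{H}^-$, I would treat $\rho$ as the disjoint union of its restrictions $\rho_+$ and $\rho_-$. For $\rho_+$ well-definedness is precisely the orbit identity above, and the image lands in $\mathcal{CM}(D,\,\Gamma\backslash\mathbb{H})$ since $\omega_Q$ is a root of the primitive polynomial $ax^2+bx+c$ of discriminant $D$ with $\gcd(a,\,N)=1$, so $\omega_Q\in\mathbb{H}(D,\,N)^\pm$. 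The negative part reduces to $\rho_+$ via complex conjugation: the assignments $Q\mapsto -Q$ and $\tau\mapsto\overline\tau$ furnish $\Gamma$-equivariant bijections $\mathcal{Q}(D,\,N)^-\leftrightarrow\mathcal{Q}(D,\,N)$ and $\mathbb{H}^-\leftrightarrow\mathbb{H}$ (conjugation commutes with the fractional linear action of the integral matrix $\gamma$), and $\rho_-([Q])=\overline{\rho_+([-Q])}$. Hence it suffices to prove $\rho_+$ is a bijection.

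For bijectivity of $\rho_+$ I would invoke the uniqueness of the primitive form attached to a root: a point $\tau\in\mathbb{H}$ with $D_\tau=D$ forces $a=\sqrt{-D}/(2\,\mathrm{Im}(\tau))>0$, then $b=-2a\,\mathrm{Re}(\tau)$ and $c=(b^2-D)/(4a)$, so there is exactly one primitive positive definite $Q$ of discriminant $D$ with $\omega_Q=\tau$. Injectivity then follows: if $[\omega_Q]=[\omega_{Q'}]$ then $\omega_{Q'}=\gamma\!\cdot\!\omega_Q=\omega_{Q^{\gamma^{-1}}}$ for some $\gamma\in\Gamma$, whence $Q'=Q^{\gamma^{-1}}$ by uniqueness, i.e.\ $[Q]=[Q']$. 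For surjectivity, given $[\tau]\in\mathcal{CM}(D,\,\Gamma\backslash\mathbb{H})$ the defining primitive polynomial of $\tau$, after normalizing its leading coefficient to be positive, homogenizes to some $Q\in\mathcal{Q}(D,\,N)$ (the condition $\gcd(a,\,N)=1$ is exactly primitivity of $\tau$ modulo $N$) with $\rho_+([Q])=[\tau]$. Transporting through conjugation gives the same for $\rho_-$, and disjointness of the two half-planes rules out interference between the positive and negative cases.

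I expect the only point needing genuine care to be the transformation law $\omega_{Q^\gamma}=\gamma^{-1}\!\cdot\!\omega_Q$ together with getting the left/right convention consistent with the definition of $\sim_\Gamma$; once this is fixed, well-definedness and injectivity become the two faces of the uniqueness statement, and the negative component is formal bookkeeping through complex conjugation. A secondary check is that $\mathbb{H}(D,\,N)^\pm$ is $\Gamma$-invariant, so that the target genuinely splits as the disjoint union of its two component-wise CM sets; this mirrors the stated invariance of $\mathcal{Q}(D,\,N)^\pm$ and follows from the same law.
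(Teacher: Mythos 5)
Your proposal is correct and follows essentially the same route as the paper: the transformation law $\omega_{Q^\gamma}=\gamma^{-1}(\omega_Q)$ gives well-definedness, the uniqueness of the primitive form attached to a CM point gives injectivity, and surjectivity comes from homogenizing the primitive polynomial of a given $\tau$ (with the sign flip on $\mathbb{H}^-$). The only difference is presentational — you factor the negative component through complex conjugation, whereas the paper runs both cases in parallel inside one chain of equivalences — and this changes nothing of substance.
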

\begin{proof}
Let $Q,\,Q'\in\mathcal{Q}(D,\,N)^\pm$ such that $[Q]=[Q']$
in $\mathcal{Q}(D,\,N)^\pm/\sim_\Gamma$, and so
$Q'=Q^\gamma$ for some $\gamma\in\Gamma$. Then we see that $-Q'=-(Q^\gamma)=(-Q)^\gamma$ and 
\begin{equation*}
\left\{\begin{array}{ll}
\omega_{Q'}=\omega_{Q^\gamma}=\gamma^{-1}(\omega_Q) & \textrm{if}~Q\in\mathcal{Q}(D,\,N),\\
\overline{\omega}_{-Q'}=\overline{\omega}_{(-Q)^\gamma}
=\overline{\gamma^{-1}(\omega_{-Q}) }=
\gamma^{-1}(\overline{\omega}_{-Q})
& \textrm{if}~Q\in\mathcal{Q}(D,\,N)^-.
\end{array}\right.
\end{equation*}
This observation implies that $\rho$ is well defined.
\par
We deduce that for $Q,\,Q'\in\mathcal{Q}(D,\,N)^\pm$
\begin{align*}
\rho([Q])=\rho([Q'])&\Longleftrightarrow
\left\{
\begin{array}{ll}[\omega_Q]=[\omega_{Q'}] & \textrm{if}~Q\in\mathcal{Q}(D,\,N),\\
\mathrm{[}\overline{\omega}_{-Q}]=
[\overline{\omega}_{-Q'}] & \textrm{if}~Q\in\mathcal{Q}(D,\,N)^-\end{array}\right.\\
&\Longleftrightarrow\left\{
\begin{array}{ll}\omega_Q=\gamma(\omega_{Q'}) & \textrm{if}~Q\in\mathcal{Q}(D,\,N),\\
\overline{\omega}_{-Q}=\gamma(\overline{\omega}_{-Q'})
=\overline{\gamma(\omega_{-Q'})}
 & \textrm{if}~Q\in\mathcal{Q}(D,\,N)^-\end{array}\right.
\quad\textrm{for some}~\gamma\in\Gamma\\
&\Longleftrightarrow\left\{
\begin{array}{ll}\omega_Q=\gamma(\omega_{Q'}) & \textrm{if}~Q\in\mathcal{Q}(D,\,N),\\
\omega_{-Q}=\gamma(\omega_{-Q'})
 & \textrm{if}~Q\in\mathcal{Q}(D,\,N)^-\end{array}\right.
\quad\textrm{for some}~\gamma\in\Gamma\\
&\Longleftrightarrow\left\{
\begin{array}{ll}Q'=Q^\gamma & \textrm{if}~Q\in\mathcal{Q}(D,\,N),\\
-Q'=(-Q)^\gamma=-(Q^\gamma)
 & \textrm{if}~Q\in\mathcal{Q}(D,\,N)^-\end{array}\right.
\quad\textrm{for some}~\gamma\in\Gamma\\
&\Longleftrightarrow Q'=Q^\gamma~\textrm{for some}~\gamma\in\Gamma\\
&\Longleftrightarrow [Q]=[Q']. 
\end{align*} 
This shows that $\rho$ is injective. 
\par
For surjectivity of $\rho$, let $\tau\in\mathbb{H}(D,\,N)^\pm$. 
Let $Q$ be the quadratic form in $\mathcal{Q}(D,\,N)$ having $\tau$ as a zero, and set
\begin{equation*}
Q_\tau=\left\{\begin{array}{rl}
Q & \textrm{if}~\tau\in\mathbb{H},\\
-Q & \textrm{if}~\tau\in\mathbb{H}^-.
\end{array}\right.
\end{equation*}
We then find by the definition (\ref{rho}) that
\begin{equation*}
\rho([Q_\tau])=\left\{\begin{array}{ll}
\rho([Q])=[\omega_Q]=[\tau] & \textrm{if}~\tau\in\mathbb{H},\\
\rho([-Q])= [\overline{\omega}_Q]=[\tau]& \textrm{if}~\tau\in\mathbb{H}^-.
\end{array}\right.
\end{equation*}
Therefore, $\rho$ is surjective and hence bijective. 
\end{proof}

Now, we have
two inverse systems in the category of sets\,:
\begin{enumerate}
\item[(i)] $\{\mathcal{CM}(D,\,Y_1(N)^\pm\}_{N\in\mathbb{N}}$
with the natural covering maps
$Y_1(m)^\pm\twoheadrightarrow Y_1(n)^\pm$ for $n\preccurlyeq m$\,;
\item[(ii)] $\{\mathcal{CM}(D,\,Y(N)^\pm\}_{N\in\mathbb{N}}$
with the natural covering maps
$Y(m)^\pm\twoheadrightarrow Y(n)^\pm$ for $n\preccurlyeq m$.
\end{enumerate}
\begin{align*}
\end{align*}

\begin{definition}\label{CMDH}
Define the following inverse limits\,:
\begin{align*}
\mathcal{CM}(D,\,\widetilde{\mathbb{H}}_1^\pm)&:=
\varprojlim_{N\in\mathbb{N}}\,\mathcal{CM}(D,\,Y_1(N)^\pm)\quad
(\subset\widetilde{\mathbb{H}}^\pm_1=\varprojlim_{N\in\mathbb{N}} Y_1(N)^\pm),\\
\mathcal{CM}(D,\,\widetilde{\mathbb{H}}^\pm)&:=
\varprojlim_{N\in\mathbb{N}}\,\mathcal{CM}(D,\,Y(N)^\pm)\quad
(\subset\widetilde{\mathbb{H}}^\pm=\varprojlim_{N\in\mathbb{N}} Y(N)^\pm).
\end{align*}
\end{definition}

\section {Inverse limits of CM points as Galois groups}

We shall prove our main theorem by interpreting the inverse limits 
\begin{equation*}
\mathcal{CM}(D,\,\widetilde{\mathbb{H}}_1^\pm)\quad\textrm{and}
\quad\mathcal{CM}(D,\,\widetilde{\mathbb{H}}^\pm)
\end{equation*}
in terms of class field theory. 

\begin{lemma}\label{KONinclusion}
Let $K^\mathrm{ab}$ denote the maximal abelian extension 
of the imaginary quadratic field $K=\mathbb{Q}(\sqrt{D})$. 
\begin{enumerate}
\item[\textup{(i)}] If $N,\,M\in\mathbb{N}$ with $N\preccurlyeq M$, then $K_{\mathcal{O},\,N}
\subseteq K_{\mathcal{O},\,M}$. 
\item[\textup{(ii)}] If $\mathcal{O}_1\subseteq\mathcal{O}_2$ are orders in $K$, then 
$K_{\mathcal{O}_1,\,N}\supseteq K_{\mathcal{O}_2,\,N}$. 
\item[\textup{(iii)}] We have $\bigcup_{N\geq1} K_{\mathcal{O},\,N}=K^\mathrm{ab}$. 
\end{enumerate}
\end{lemma}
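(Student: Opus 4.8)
The plan is to deduce all three parts from the inclusion-reversing correspondence of class field theory, phrased idelically so that the fields $K_{\mathcal{O},N}$ receive a single uniform description as $N$ and $\mathcal{O}$ vary. Let $\mathbb{A}_{K,f}^\times$ denote the finite idele group of $K$ and write $\widehat{\mathcal{O}}=\mathcal{O}\otimes_{\mathbb{Z}}\widehat{\mathbb{Z}}$. The ray class group $I(\mathcal{O},\,N)/P_1(\mathcal{O},\,N)$ of (\ref{IPG}) is canonically isomorphic to $\mathbb{A}_{K,f}^\times/(K^\times U_{\mathcal{O},N})$, where $U_{\mathcal{O},N}=\widehat{\mathcal{O}}^{\times}\cap(1+N\widehat{\mathcal{O}})$ is an open subgroup of $\mathbb{A}_{K,f}^\times$, and under this isomorphism $K_{\mathcal{O},N}$ is the fixed field of $U_{\mathcal{O},N}$. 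The first, and only genuinely technical, step is to install this dictionary, reconciling the definition of $K_{\mathcal{O},N}$ through the modulus $\ell N\mathcal{O}_K$ in (\ref{IPG}) with the subgroup $U_{\mathcal{O},N}$; this is the standard comparison between proper $\mathcal{O}$-ideals prime to the conductor and ideles lying in $\widehat{\mathcal{O}}^{\times}$, and I would simply cite it. This is the main obstacle: once it is in place, the assignment (open subgroup) $\mapsto$ (fixed field) is inclusion-reversing, and parts (i) and (ii) reduce to inclusions among the $U_{\mathcal{O},N}$.

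For (i), suppose $N\mid M$. Then $M\widehat{\mathcal{O}}\subseteq N\widehat{\mathcal{O}}$, so $U_{\mathcal{O},M}\subseteq U_{\mathcal{O},N}$, and the inclusion-reversing correspondence gives $K_{\mathcal{O},N}\subseteq K_{\mathcal{O},M}$. For (ii), suppose $\mathcal{O}_1\subseteq\mathcal{O}_2$. Then $\widehat{\mathcal{O}}_1\subseteq\widehat{\mathcal{O}}_2$; moreover any $u\in\widehat{\mathcal{O}}_1^{\times}$ has $u,\,u^{-1}\in\widehat{\mathcal{O}}_1\subseteq\widehat{\mathcal{O}}_2$, whence $u\in\widehat{\mathcal{O}}_2^{\times}$ and so $\widehat{\mathcal{O}}_1^{\times}\subseteq\widehat{\mathcal{O}}_2^{\times}$. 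Combining the two inclusions yields $U_{\mathcal{O}_1,N}\subseteq U_{\mathcal{O}_2,N}$, and hence $K_{\mathcal{O}_1,N}\supseteq K_{\mathcal{O}_2,N}$, as asserted.

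For (iii), each $K_{\mathcal{O},N}$ is abelian over $K$, so $\bigcup_{N\ge1}K_{\mathcal{O},N}\subseteq K^{\mathrm{ab}}$. For the reverse inclusion I would invoke part (ii) with $\mathcal{O}\subseteq\mathcal{O}_K$ to obtain $K_{\mathcal{O},N}\supseteq K_{\mathcal{O}_K,N}$, thereby reducing to the maximal order. The fields $K_{\mathcal{O}_K,N}$ are the classical ray class fields of $K$ of modulus $N\mathcal{O}_K$, and the integer moduli $\{N\mathcal{O}_K\}_{N\ge1}$ are cofinal among all moduli of $K$: if $L/K$ is finite abelian with conductor $\mathfrak{f}$, then any positive integer $N\in\mathfrak{f}$ satisfies $\mathfrak{f}\mid N\mathcal{O}_K$, so $L\subseteq K_{\mathfrak{f}}\subseteq K_{\mathcal{O}_K,N}$. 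Therefore $\bigcup_{N\ge1}K_{\mathcal{O}_K,N}=K^{\mathrm{ab}}$, and the chain of inclusions forces $\bigcup_{N\ge1}K_{\mathcal{O},N}=K^{\mathrm{ab}}$. Alternatively, one may finish (iii) entirely on the idelic side: since $\widehat{\mathcal{O}}^{\times}$ is open in $\mathbb{A}_{K,f}^\times$ and the subgroups $U_{\mathcal{O},N}$ form a neighbourhood basis of $1$ in it, one has $\bigcap_{N\ge1}K^\times U_{\mathcal{O},N}=\overline{K^\times}$, the subgroup of $\mathbb{A}_{K,f}^\times$ whose fixed field is $K^{\mathrm{ab}}$.
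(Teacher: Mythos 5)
Your proposal is correct. The paper itself gives essentially no argument for (i) and (ii): it cites Stevenhagen's article (\S4, pp.\ 168--169) for both, and then proves (iii) exactly as you do in your first finish --- reduce to the maximal order via (ii) and invoke the classical fact that every finite abelian extension of $K$ lies in a ray class field (so the paper's proof of (iii) and yours coincide, down to the cofinality of the moduli $N\mathcal{O}_K$). What you do differently is open the black box: the idelic dictionary $K_{\mathcal{O},N}\leftrightarrow K^\times U_{\mathcal{O},N}$ with $U_{\mathcal{O},N}=\widehat{\mathcal{O}}^\times\cap(1+N\widehat{\mathcal{O}})$ is precisely the mechanism underlying the cited pages of Stevenhagen, and once it is installed, your monotonicity arguments ($M\widehat{\mathcal{O}}\subseteq N\widehat{\mathcal{O}}$ for (i); $\widehat{\mathcal{O}}_1^\times\subseteq\widehat{\mathcal{O}}_2^\times$ via $u,u^{-1}\in\widehat{\mathcal{O}}_2$ for (ii)) are correct and complete, as is your alternative idelic ending for (iii) via $\bigcap_N K^\times U_{\mathcal{O},N}=\overline{K^\times}$. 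The trade-off is the usual one: the paper's proof is two citations and three lines, while yours is self-contained modulo the single comparison of ideal-theoretic and idelic class groups for non-maximal orders --- which you correctly identify as the only technical input and which you would also cite. There is no gap.
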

\begin{proof}
\begin{enumerate}
\item[(i)] See \cite[$\S$4, p. 168]{Stevenhagen}. 
\item[(ii)] See \cite[$\S$4, p. 169]{Stevenhagen}. 
\item[(iii)] We get from (ii) that 
\begin{equation*}
K^\mathrm{ab}\supseteq\bigcup_{N\geq1} K_{\mathcal{O},\,N}
\supseteq\bigcup_{N\geq1} K_{\mathcal{O}_K,\,N}. 
\end{equation*}
The result follows from the classical fact that every finite abelian extension of $K$ is contained in
a ray class field of $K$\,; see \cite[$\S$V.6]{Janusz}. 
\end{enumerate}
\end{proof}

\begin{theorem}\label{main}
Let $D\equiv0$ or $1\Mod{4}$ be a negative integer, 
and let $K=\mathbb{Q}(\sqrt{D})$ be the 
corresponding imaginary quadratic field. 
Let $t$ be a transcendental number. 
\begin{enumerate}
\item[\textup{(i)}] The subset $\mathcal{CM}(D,\,\widetilde{\mathbb{H}}_1^\pm)$ 
of $\widetilde{\mathbb{H}}_1^\pm$ admits a group structure isomorphic to
$\mathrm{Gal}(K^\mathrm{ab}/\mathbb{Q})$.
\item[\textup{(ii)}] 
Define
\begin{equation*}
K^\mathrm{ab}(t^{1/\infty}):=\bigcup_{N\geq1}K^\mathrm{ab}(t^{1/N}). 
\end{equation*}
The subset
$\mathcal{CM}(D,\,\widetilde{\mathbb{H}}^\pm)$ of 
$\widetilde{\mathbb{H}}^\pm$ can be regarded as a group isomorphic to 
$\mathrm{Gal}(K^\mathrm{ab}(t^{1/\infty})/\mathbb{Q}(t))$.
Moreover, $\mathcal{CM}(D,\,\widetilde{\mathbb{H}}_1^\pm)$ 
can be identified with a quotient group
of $\mathcal{CM}(D,\,\widetilde{\mathbb{H}}^\pm)$
 via the natural forgetful maps. 
\end{enumerate}
\end{theorem}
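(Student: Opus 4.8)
The plan is to reduce everything to class field theory through the two dictionaries already in place: the bijection $\rho$ of Lemma \ref{formpoint} identifying CM points with form classes, and the group isomorphisms of Corollary \ref{GalFormCor} identifying form classes with Galois groups. Concretely, for each $N$ the composite
\begin{equation*}
\mathcal{CM}(D,\,Y_1(N)^\pm)\xleftarrow{\ \rho\ }\mathcal{Q}(D,\,N)^\pm/\sim_{\Gamma_1(N)}\xrightarrow{\ \sim\ }\mathrm{Gal}(K_{\mathcal{O},\,N}/\mathbb{Q})
\end{equation*}
transports the group structure of Corollary \ref{GalFormCor}(i) onto the finite set $\mathcal{CM}(D,\,Y_1(N)^\pm)$, and similarly Corollary \ref{GalFormCor}(ii) endows $\mathcal{CM}(D,\,Y(N)^\pm)$ with a group structure isomorphic to $\mathrm{Gal}(K_{\mathcal{O},\,N}(t^{1/N})/\mathbb{Q}(t))$. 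The theorem will then follow by passing to the inverse limit, once I check that the transition maps of the two inverse systems are group homomorphisms matching the Galois restriction maps.

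First I would verify that the covering maps restrict to maps of the CM subsets and admit a form-theoretic description. For $N\preccurlyeq M$ a CM point represented by $\tau$ of discriminant $D$ that is primitive modulo $M$ is a fortiori primitive modulo $N$, so $[\tau]_{\Gamma_1(M)}\mapsto[\tau]_{\Gamma_1(N)}$ sends $\mathcal{CM}(D,\,Y_1(M)^\pm)$ into $\mathcal{CM}(D,\,Y_1(N)^\pm)$; since $\mathcal{Q}(D,\,M)\subseteq\mathcal{Q}(D,\,N)$ and $\Gamma_1(M)\subseteq\Gamma_1(N)$, the bijection $\rho$ intertwines this covering map with the natural surjection $[Q]_{\Gamma_1(M)}\mapsto[Q]_{\Gamma_1(N)}$ on form classes, which is exactly the surjection of Remark \ref{MNsurjective}. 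The analogous statement holds at full level with $\Gamma(M)\subseteq\Gamma(N)$.

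The heart of the argument, and the step I expect to be the main obstacle, is to show that under the isomorphisms of Corollary \ref{GalFormCor} these form-class transition maps correspond to the restriction homomorphisms $\mathrm{Gal}(K_{\mathcal{O},\,M}/\mathbb{Q})\to\mathrm{Gal}(K_{\mathcal{O},\,N}/\mathbb{Q})$ and $\mathrm{Gal}(K_{\mathcal{O},\,M}(t^{1/M})/\mathbb{Q}(t))\to\mathrm{Gal}(K_{\mathcal{O},\,N}(t^{1/N})/\mathbb{Q}(t))$, the latter being legitimate because $K_{\mathcal{O},\,N}\subseteq K_{\mathcal{O},\,M}$ by Lemma \ref{KONinclusion}(i) and $t^{1/N}\in\mathbb{Q}(t^{1/M})$. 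On the $\mathrm{Gal}(\,\cdot\,/K)$ part this amounts to the functoriality of Artin reciprocity: the map $[Q]\mapsto[\mathbb{Z}\omega_Q+\mathbb{Z}]$ of Proposition \ref{groupstructure}(i) sends a level-$M$ class to the same proper $\mathcal{O}$-ideal viewed at level $N$, and the inclusions $I(\mathcal{O},\,M)\subseteq I(\mathcal{O},\,N)$, $P_1(\mathcal{O},\,M)\subseteq P_1(\mathcal{O},\,N)$ make the isomorphisms (\ref{IPG}) commute with restriction. The extension to $\mathrm{Gal}(\,\cdot\,/\mathbb{Q})$ and $\mathrm{Gal}(\,\cdot\,/\mathbb{Q}(t))$ is then compatible because the semidirect decomposition (\ref{semidirect}) is functorial in $N$, since complex conjugation $\mathfrak{c}|_{K_{\mathcal{O},\,M}}$ restricts to $\mathfrak{c}|_{K_{\mathcal{O},\,N}}$. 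I would record this as a commutative square for each pair $N\preccurlyeq M$, so that the two inverse systems become inverse systems of finite groups.

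Finally I would take inverse limits. Since the $K_{\mathcal{O},\,N}$ are finite Galois over $\mathbb{Q}$, directed under inclusion, and satisfy $\bigcup_{N}K_{\mathcal{O},\,N}=K^{\mathrm{ab}}$ by Lemma \ref{KONinclusion}(iii), they form a cofinal family of finite subextensions of $K^{\mathrm{ab}}/\mathbb{Q}$, whence $\mathcal{CM}(D,\,\widetilde{\mathbb{H}}_1^\pm)=\varprojlim_N\mathrm{Gal}(K_{\mathcal{O},\,N}/\mathbb{Q})=\mathrm{Gal}(K^{\mathrm{ab}}/\mathbb{Q})$, proving (i). For (ii) the same reasoning applied to $L_N:=K_{\mathcal{O},\,N}(t^{1/N})$ gives $\bigcup_N L_N=K^{\mathrm{ab}}(t^{1/\infty})$ and hence $\mathcal{CM}(D,\,\widetilde{\mathbb{H}}^\pm)\cong\mathrm{Gal}(K^{\mathrm{ab}}(t^{1/\infty})/\mathbb{Q}(t))$. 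The ``moreover'' clause follows by taking the inverse limit of the forgetful maps $Y(N)^\pm\twoheadrightarrow Y_1(N)^\pm$, which by Figure \ref{diagram}, extended over $\mathbb{Q}$ via (\ref{semidirect}), realize at each level the natural surjection $\mathrm{Gal}(L_N/\mathbb{Q}(t))\twoheadrightarrow\mathrm{Gal}(K_{\mathcal{O},\,N}/\mathbb{Q})$; as these are surjections of finite groups the inverse system satisfies the Mittag--Leffler condition, so the limit map $\mathcal{CM}(D,\,\widetilde{\mathbb{H}}^\pm)\to\mathcal{CM}(D,\,\widetilde{\mathbb{H}}_1^\pm)$ is a surjective group homomorphism, exhibiting the latter as the quotient $\mathrm{Gal}(K^{\mathrm{ab}}(t^{1/\infty})/\mathbb{Q}(t))\twoheadrightarrow\mathrm{Gal}(K^{\mathrm{ab}}/\mathbb{Q})$ given by restriction to $K^{\mathrm{ab}}(t)$.
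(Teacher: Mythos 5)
Your proposal is correct and follows essentially the same route as the paper: identify each $\mathcal{CM}(D,\,Y_\bullet(N)^\pm)$ with a form class group via Lemma \ref{formpoint}, transport the Galois-group structure from Corollary \ref{GalFormCor}, and pass to the inverse limit using Lemma \ref{KONinclusion}, with the quotient statement coming from Figure \ref{diagram}. The only difference is that you spell out the compatibility of the transition maps with Galois restriction (via functoriality of Artin reciprocity and of the decomposition (\ref{semidirect})), which the paper leaves implicit.
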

\begin{proof}
\begin{enumerate}
\item[(i)] As sets, we have
\begin{align*}
\mathcal{CM}(D,\,\widetilde{\mathbb{H}}_1^\pm)&=
\varprojlim_{N\in\mathbb{N}}\,\mathcal{CM}(D,\,Y_1(N)^\pm)\\
&\simeq\varprojlim_{N\in\mathbb{N}}\,\mathcal{Q}(D,\,N)^\pm/\sim_{\Gamma_1(N)}\quad\textrm{by Lemma \ref{formpoint}}\\
&\simeq\varprojlim_{N\in\mathbb{N}}\,\mathrm{Gal}(K_{\mathcal{O},\,N}/\mathbb{Q})
\quad\textrm{by Corollary \ref{GalFormCor} (i)}\\
&\simeq\mathrm{Gal}(\bigcup_{N\geq1} K_{\mathcal{O},\,N}/\mathbb{Q})\quad\textrm{by Lemma \ref{KONinclusion} (i)
(cf. \cite[Theorem 164]{I-M} and \cite[$\S$I.2]{Neukirch})}\\
&=\mathrm{Gal}(K^\mathrm{ab}/\mathbb{Q})\quad\textrm{by Lemma \ref{KONinclusion} (iii)}. 
\end{align*}
Thus, $\mathcal{CM}(D,\,\widetilde{\mathbb{H}}_1^\pm)$ inherits a natural group 
structure isomorphic to $\mathrm{Gal}(K^\mathrm{ab}/\mathbb{Q})$.  
\item[(ii)] Similarly, we find that
\begin{align*}
\mathcal{CM}(D,\,\widetilde{\mathbb{H}}^\pm)&=
\varprojlim_{N\in\mathbb{N}}\,\mathcal{CM}(D,\,Y(N)^\pm)\\
&\simeq\varprojlim_{N\in\mathbb{N}}\,\mathcal{Q}(D,\,N)^\pm/\sim_{\Gamma(N)}\quad\textrm{by Lemma \ref{formpoint}}\\
&\simeq\varprojlim_{N\in\mathbb{N}}\,\mathrm{Gal}(K_{\mathcal{O},\,N}(t^{1/N})/\mathbb{Q}(t))\quad\textrm{by Corollary \ref{GalFormCor} (ii)}\\
&\simeq\mathrm{Gal}(\bigcup_{N\geq1} K_{\mathcal{O},\,N}(t^{1/N})/\mathbb{Q}(t))\quad\textrm{by Lemma \ref{KONinclusion} (i)}\\
&=\mathrm{Gal}(K^\mathrm{ab}(t^{1/\infty})/\mathbb{Q}(t))\quad\textrm{by Lemma \ref{KONinclusion} (iii)}.  
\end{align*}
The second assertion follows from (i) together with the natural isomorphism
\begin{equation*}
\mathrm{Gal}(K^\mathrm{ab}/\mathbb{Q})
\simeq
\mathrm{Gal}(K^\mathrm{ab}(t)/\mathbb{Q}(t))\\
\simeq
\mathrm{Gal}(K^\mathrm{ab}(t^{1/\infty})/\mathbb{Q}(t))\,/\,
\mathrm{Gal}(K^\mathrm{ab}(t^{1/\infty})/K^\mathrm{ab}(t)) 
\end{equation*}
and the commutative diagram in Figure \ref{diagram}. 
\end{enumerate}
\end{proof}

\begin{corollary}
Define
\begin{align*}
\mathcal{CM}_1^\pm&:=\varprojlim_{N\in\mathbb{N}}
\{[\tau]\in Y_1(N)^\pm~|~\textrm{$\tau\in\mathbb{H}^\pm$ is a CM point primitive modulo $N$}\}~(\subset\widetilde{\mathbb{H}}_1^\pm),\\
\mathcal{CM}^\pm&:=\varprojlim_{N\in\mathbb{N}}
\{[\tau]\in Y(N)^\pm~|~\textrm{$\tau\in\mathbb{H}^\pm$ is a CM point primitive modulo $N$}\}~(\subset\widetilde{\mathbb{H}}^\pm). 
\end{align*}
Then, as sets, we have the decompositions
\begin{equation*}
\mathcal{CM}_1^\pm\simeq\bigsqcup_{K}\bigsqcup_{\ell=1}^\infty\mathrm{Gal}(K^\mathrm{ab}/\mathbb{Q})
\quad\textrm{and}\quad
\mathcal{CM}^\pm\simeq\bigsqcup_{K}\bigsqcup_{\ell=1}^\infty\mathrm{Gal}(K^\mathrm{ab}(t_{K,\,\ell}^{1/\infty})/\mathbb{Q}(t_{K,\,\ell})),
\end{equation*}
where $K$ ranges over all imaginary quadratic fields, $\ell$ indicates
the conductor of an arbitrary order in $K$, and $\{t_{K,\,\ell}\}_{K,\,\ell}$ is a collection of algebraically independent
transcendental numbers. 
\end{corollary}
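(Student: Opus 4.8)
The plan is to deduce the corollary from Theorem \ref{main} by sorting all CM points according to their discriminants. First I would record that every CM point $\tau\in\mathbb{H}^\pm$ carries a well-defined discriminant $D_\tau$, a negative integer with $D_\tau\equiv0$ or $1\Mod{4}$ (Definition \ref{CMDN}), and that $D_\tau$ is an invariant of the $\mathrm{SL}_2(\mathbb{Z})$-orbit of $\tau$, since every $\gamma\in\mathrm{SL}_2(\mathbb{Z})$ has determinant $1$ and therefore preserves the discriminant of the primitive quadratic polynomial attached to $\tau$. In particular the covering maps $Y_1(M)^\pm\twoheadrightarrow Y_1(N)^\pm$ and $Y(M)^\pm\twoheadrightarrow Y(N)^\pm$, induced by the inclusions $\Gamma_1(M)\subseteq\Gamma_1(N)$ and $\Gamma(M)\subseteq\Gamma(N)$ for $N\preccurlyeq M$, send a CM point of discriminant $D$ to a CM point of the same discriminant $D$. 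Since $\gcd(a,\,M)=1$ forces $\gcd(a,\,N)=1$ whenever $N\preccurlyeq M$, these maps also respect primitivity, so the inverse systems defining $\mathcal{CM}_1^\pm$ and $\mathcal{CM}^\pm$ are well-defined and, at each finite level, decompose as
\[
\{[\tau]\in Y_1(N)^\pm\mid\tau\text{ is a CM point primitive modulo }N\}=\bigsqcup_D\mathcal{CM}(D,\,Y_1(N)^\pm),
\]
where $D$ runs over all negative integers with $D\equiv0$ or $1\Mod{4}$, and similarly for $Y(N)^\pm$.

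Next I would pass the disjoint union through the inverse limit. This is the single step that requires genuine care, because inverse limits do not commute with infinite coproducts in general. The point is that the discriminant is locally constant along the tower: given a compatible family $(s_N)_N$, each $s_N$ lies in a unique component $\mathcal{CM}(D_N,\,Y_1(N)^\pm)$, and since the transition maps preserve these components and $(\mathbb{N},\,\preccurlyeq)$ is directed, any two levels $N_1,\,N_2$ admit a common upper bound $M$, whence $D_{N_1}=D_M=D_{N_2}$. Thus $D_N$ is constant on the whole family, and I obtain
\[
\mathcal{CM}_1^\pm=\bigsqcup_D\mathcal{CM}(D,\,\widetilde{\mathbb{H}}_1^\pm),\qquad
\mathcal{CM}^\pm=\bigsqcup_D\mathcal{CM}(D,\,\widetilde{\mathbb{H}}^\pm).
\]

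Finally I would apply Theorem \ref{main} termwise and reindex the union. Every negative discriminant $D\equiv0$ or $1\Mod{4}$ factors uniquely as $D=\ell^2 d_K$, where $d_K$ is the fundamental discriminant of the imaginary quadratic field $K=\mathbb{Q}(\sqrt{D})$ and $\ell\geq1$ is the conductor of the order of discriminant $D$; this gives a bijection between such $D$ and pairs $(K,\,\ell)$. Substituting $\mathcal{CM}(D,\,\widetilde{\mathbb{H}}_1^\pm)\simeq\mathrm{Gal}(K^\mathrm{ab}/\mathbb{Q})$ from Theorem \ref{main}(i) into the first decomposition and reindexing by $(K,\,\ell)$ yields the first claimed set-isomorphism; each component is nonempty, being a Galois group, so the full disjoint union over all $(K,\,\ell)$ indeed appears.

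For the second decomposition I would substitute Theorem \ref{main}(ii). Here each discriminant component contributes a factor $\mathrm{Gal}(K^\mathrm{ab}(t^{1/\infty})/\mathbb{Q}(t))$ that depends on a choice of transcendental number $t$, so I would attach to the component indexed by $(K,\,\ell)$ its own transcendental $t_{K,\,\ell}$. Because the set-theoretic isomorphism class of each factor does not depend on which transcendental is used, choosing the $\{t_{K,\,\ell}\}$ to be algebraically independent is only a normalization that renders the disjoint union unambiguous as a family of distinct fields. This produces the second claimed decomposition. The only nonformal ingredient is the commutation of the inverse limit with the coproduct in the second paragraph, which I expect to be the main --- though mild --- obstacle; the remainder is a direct application of Theorem \ref{main} together with the bookkeeping bijection $D\leftrightarrow(K,\,\ell)$.
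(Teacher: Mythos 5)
Your proposal is correct and follows essentially the same route as the paper: partition the CM points at each finite level by discriminant, commute the inverse limit with the disjoint union, apply Theorem \ref{main} termwise, and reindex via $D=\ell^2 d_K$. The only difference is that you spell out the justification for exchanging $\varprojlim$ with $\bigsqcup$ (local constancy of the discriminant along the directed tower), which the paper asserts without comment; this is a welcome addition rather than a deviation.
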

\begin{proof}
For each $N\in\mathbb{N}$, we have a partition 
\begin{equation*}
\{[\tau]\in Y_1(N)^\pm~|~\textrm{$\tau\in\mathbb{H}^\pm$ is a CM point primitive modulo $N$}\}
=\bigsqcup_D\mathcal{CM}(D,\,Y_1(N)^\pm),
\end{equation*}
where $D$ runs over all negative integers such that $D\equiv0$ or $1\Mod{4}$.
Taking the inverse limit yields
\begin{align*}
\mathcal{CM}_1^\pm&\simeq\varprojlim_{N\in\mathbb{N}}\bigsqcup_D
\mathcal{CM}(D,\,Y_1(N)^\pm)\\
&\simeq\bigsqcup_D\varprojlim_{N\in\mathbb{N}}
\mathcal{CM}(D,\,Y_1(N)^\pm)\\
&\simeq\bigsqcup_D
\mathrm{Gal}(\mathbb{Q}(\sqrt{D})^\mathrm{ab}/\mathbb{Q})\quad\textrm{by Theorem \ref{main} (i)}\\
&=\bigsqcup_K\bigsqcup_{\ell=1}^\infty
\mathrm{Gal}(\mathbb{Q}(\sqrt{\ell^2d_K})^\mathrm{ab}/\mathbb{Q})\quad\textrm{with}~D=\ell^2d_K\\
&\hspace{2cm}\textrm{where $K$ runs over all imaginary quadratic fields of discriminant $d_K$}\\
&=\bigsqcup_K\bigsqcup_{\ell=1}^\infty
\mathrm{Gal}(K^\mathrm{ab}/\mathbb{Q}).
\end{align*}
\par
The second decomposition follows analogously from Theorem \ref{main} (ii). 
\end{proof}

\section {An explicit description of some Galois groups}

Let $p$ be an odd prime. In this final section, we shall provide
an explicit realization of the Galois group
\begin{equation*} 
\mathrm{Gal}(K_{\mathcal{O},\,p^\infty}(t^{1/p^\infty})/\mathbb{Q}(t))
\end{equation*}
under the assumption $D\neq-3,\,-4$, where $t$ is a transcendental number and
\begin{equation}\label{defKp}
K_{\mathcal{O},\,p^\infty}(t^{1/p^\infty}):=\bigcup_{n\geq1}K_{\mathcal{O},\,p^n}(t^{1/p^n}). 	 
\end{equation}

\begin{lemma}\label{plemma}
Let $\{\gamma_n\}_{n\geq1}$ and $\{\gamma_n'\}_{n\geq1}$ be sequences in 
$\mathrm{SL}_2(\mathbb{Z})$ satisfying\,\textup{:}
\begin{enumerate}
\item[\textup{(i)}] $\gamma_{n+1}\equiv\gamma_n\Mod{p^n M_2(\mathbb{Z})}$ and
$\gamma_{n+1}'\equiv\gamma_n'\Mod{p^n M_2(\mathbb{Z})}$
for all $n\geq1$\,\textup{;}
\item[\textup{(ii)}] $\gamma_1\equiv\gamma_1'\equiv I_2\Mod{p M_2(\mathbb{Z})}$\,\textup{;} 
\item[\textup{(iii)}] For each $n\geq1$, $\gamma_n\equiv\gamma_n'\Mod{p^n M_2(\mathbb{Z})}$ or
$\gamma_n\equiv-\gamma_n'\Mod{p^n M_2(\mathbb{Z})}$. 
\end{enumerate}
Then their $p$-adic limits
$\gamma:=\displaystyle\lim_{n\rightarrow\infty}\gamma_n$ and 
$\gamma':=\displaystyle\lim_{n\rightarrow\infty}\gamma_n'$ in $\mathrm{SL}_2(\mathbb{Z}_p)$ coincide, that is,
$\gamma=\gamma'$. 
\end{lemma}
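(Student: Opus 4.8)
The plan is to first record that the two limits actually exist, then reduce the desired equality $\gamma=\gamma'$ to the level-by-level congruence $\gamma_n\equiv\gamma_n'\pmod{p^nM_2(\mathbb{Z})}$, and finally use conditions (ii) and (iii) together with the odd-prime hypothesis to eliminate the sign ambiguity inherent in (iii). The whole argument is driven by a single cancellation fact: since $\det\gamma_n'=1$ and $p$ is odd, $2\gamma_n'$ cannot vanish modulo $p^n$.

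First I would observe that condition (i) guarantees convergence. For each fixed $n$ the relations $\gamma_{m+1}\equiv\gamma_m\pmod{p^mM_2(\mathbb{Z})}$ for $m\geq n$ give, by induction, $\gamma_m\equiv\gamma_n\pmod{p^nM_2(\mathbb{Z})}$ for all $m\geq n$; hence $\{\gamma_m\}$ is Cauchy entrywise in the $p$-adic metric and converges to $\gamma\in M_2(\mathbb{Z}_p)$, which lies in $\mathrm{SL}_2(\mathbb{Z}_p)$ by continuity of the determinant. Passing to the limit in these congruences yields $\gamma\equiv\gamma_n\pmod{p^nM_2(\mathbb{Z}_p)}$, and similarly $\gamma'\equiv\gamma_n'\pmod{p^nM_2(\mathbb{Z}_p)}$, for every $n$. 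Consequently it suffices to prove $\gamma_n\equiv\gamma_n'\pmod{p^nM_2(\mathbb{Z})}$ for all $n$: then $\gamma\equiv\gamma_n\equiv\gamma_n'\equiv\gamma'\pmod{p^n}$ for every $n$, forcing $\gamma=\gamma'$.

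Next I would extract and normalize the signs. By (iii), for each $n$ there is $\epsilon_n\in\{+1,-1\}$ with $\gamma_n\equiv\epsilon_n\gamma_n'\pmod{p^nM_2(\mathbb{Z})}$; this $\epsilon_n$ is unambiguous, since $\gamma_n\equiv\gamma_n'$ and $\gamma_n\equiv-\gamma_n'$ holding simultaneously would force $2\gamma_n'\equiv0\pmod{p^n}$, hence $\gamma_n'\equiv0\pmod{p^n}$ because $p$ is odd, contradicting $\det\gamma_n'=1$. The target is now $\epsilon_n=+1$ for all $n$. The base case is immediate from (ii): $\gamma_1\equiv\gamma_1'\equiv I_2\pmod{p}$ gives $\epsilon_1=+1$, as $\epsilon_1=-1$ would yield $I_2\equiv-I_2\pmod{p}$, impossible for odd $p$.

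Finally I would propagate the sign using (i). Reducing $\gamma_{n+1}\equiv\epsilon_{n+1}\gamma_{n+1}'\pmod{p^{n+1}}$ modulo $p^n$ and substituting $\gamma_{n+1}\equiv\gamma_n$, $\gamma_{n+1}'\equiv\gamma_n'\pmod{p^n}$ gives $\gamma_n\equiv\epsilon_{n+1}\gamma_n'\pmod{p^n}$; comparing with $\gamma_n\equiv\epsilon_n\gamma_n'\pmod{p^n}$ yields $(\epsilon_n-\epsilon_{n+1})\gamma_n'\equiv0\pmod{p^n}$. By the same cancellation fact, $\epsilon_n-\epsilon_{n+1}\in\{0,\pm2\}$ must equal $0$, so $\epsilon_n=\epsilon_{n+1}$; inductively $\epsilon_n=\epsilon_1=+1$, which completes the proof. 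The main obstacle, and the only place the odd-prime hypothesis is genuinely needed, is controlling exactly this sign ambiguity in (iii): a priori the two sequences could be negatives of each other at every level, and the crux is the observation that $2\gamma_n'\not\equiv0\pmod{p^n}$, which at once makes $\epsilon_n$ well-defined, excludes $-1$ at the base, and blocks any sign flip between consecutive levels.
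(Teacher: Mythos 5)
Your proof is correct and rests on exactly the same mechanism as the paper's: since $p$ is odd, a sign of $-1$ in condition (iii) is impossible (equivalently, $2\gamma_n'\not\equiv0\pmod{p^n}$ because $\det\gamma_n'=1$), so the congruence $\gamma_n\equiv\gamma_n'\pmod{p^nM_2(\mathbb{Z})}$ holds at every level and the limits coincide. The only cosmetic difference is bookkeeping: you propagate a sign $\epsilon_n$ inductively from level to level via (i), whereas the paper uses (i) and (ii) once to get $\gamma_n\equiv\gamma_n'\equiv I_2\pmod{pM_2(\mathbb{Z})}$ for all $n$, after which a minus sign at any level immediately gives the contradiction $I_2\equiv-I_2\pmod{p}$; you also spell out the convergence step that the paper takes for granted.
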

\begin{proof}
We see from (i) and (ii) that
\begin{equation}\label{allI}
\gamma_n\equiv\gamma_n'\equiv I_2\Mod{pM_2(\mathbb{Z})}\quad
\textrm{for all}~n\geq1. 
\end{equation}
Let $S=\{n\geq1~|~\gamma_n\not\equiv\gamma_n'\Mod{p^n M_2(\mathbb{Z})}\}$, and then $1\not\in S$. 
If $S\neq\emptyset$, then there exists $m\geq2$ such that 
\begin{equation*}
\gamma_m\equiv-\gamma_m'\Mod{p^m M_2(\mathbb{Z})}
\end{equation*}
by (iii). This contradicts (\ref{allI}), since $p\geq3$. 
\par
Thus we conclude that $\gamma_n\equiv\gamma_n'\Mod{p^n M_2(\mathbb{Z})}$ for all $n\geq1$, and hence
$\gamma=\gamma'$. 
\end{proof}

\begin{lemma}\label{isotropy}
Let $\tau\in\mathbb{H}^\pm$ be a CM point. If $D_\tau\neq-3,\,-4$, 
then the isotropy group of $\tau$ in $\mathrm{SL}_2(\mathbb{Z})$ is
$\{I_2,\,-I_2\}$, namely,
$\{\gamma\in\mathrm{SL}_2(\mathbb{Z})~|~\gamma(\tau)=\gamma\}=\{I_2,\,-I_2\}$. 
\end{lemma}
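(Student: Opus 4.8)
The plan is to translate the fixed-point condition into a Diophantine constraint on the trace of $\gamma$ and to read off the exceptional discriminants directly. Suppose $\gamma=\begin{bmatrix} a & b \\ c & d\end{bmatrix}\in\mathrm{SL}_2(\mathbb{Z})$ satisfies $\gamma(\tau)=\tau$. Writing $\frac{a\tau+b}{c\tau+d}=\tau$ and clearing denominators gives $c\tau^2+(d-a)\tau-b=0$. If $c=0$, then $(d-a)\tau=b$ with $d-a,b\in\mathbb{Z}$ and $\tau\notin\mathbb{R}$ forces $d-a=0$ and $b=0$; together with $\det\gamma=1$ this yields $a=d=\pm1$, so $\gamma=\pm I_2$. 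Hence it suffices to rule out $c\neq0$ (equivalently $\gamma\neq\pm I_2$) when $D_\tau\neq-3,-4$.

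Assume $c\neq0$. Then $f(x):=cx^2+(d-a)x-b\in\mathbb{Z}[x]$ is a genuine quadratic vanishing at the quadratic irrational $\tau$. Let $p(x)$ be the primitive integer polynomial with root $\tau$, unique up to sign and of discriminant $D_\tau$. Since $f$ and $p$ are degree-two integer polynomials with the common root $\tau$, they are $\mathbb{Q}$-proportional, say $f=\mu p$; because $f\in\mathbb{Z}[x]$ and $p$ is primitive, a content computation (Gauss's lemma) forces $\mu\in\mathbb{Z}$, and $\mu\neq0$ since $c\neq0$.

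The heart of the argument is a single discriminant identity. Using $ad-bc=1$, the discriminant of $f$ equals $(d-a)^2+4bc=(a+d)^2-4=s^2-4$, where $s=\mathrm{tr}(\gamma)\in\mathbb{Z}$. On the other hand $f=\mu p$ gives $\mathrm{disc}(f)=\mu^2\,\mathrm{disc}(p)=\mu^2 D_\tau$. Equating, I obtain $s^2+\mu^2|D_\tau|=4$ with $s\in\mathbb{Z}$ and $\mu\in\mathbb{Z}\setminus\{0\}$. Since $D_\tau$ is a negative discriminant we have $|D_\tau|\geq3$, so $\mu^2|D_\tau|\geq3$ and hence $s^2\leq1$. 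The only solutions are $\mu^2|D_\tau|=3$, forcing $\mu^2=1$ and $D_\tau=-3$, and $\mu^2|D_\tau|=4$, forcing $\mu^2=1$ and $D_\tau=-4$ (as $|D_\tau|=3$ would make $\mu^2=4/3$ non-integral). Both are excluded by hypothesis, so the case $c\neq0$ is impossible and the isotropy group is exactly $\{I_2,-I_2\}$.

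I do not anticipate a genuine obstacle, as this is the classical classification of elliptic fixed points under $\mathrm{SL}_2(\mathbb{Z})$; the only step requiring real care is the integrality of the proportionality constant $\mu$, which is what sharpens the coarse bound $|s|<2$ into the precise list $D_\tau\in\{-3,-4\}$ and relies on the primitivity of $p$ rather than on any analytic input.
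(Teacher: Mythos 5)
Your argument is correct and complete. The paper does not actually prove this lemma: it simply cites Silverman, \emph{Advanced Topics in the Arithmetic of Elliptic Curves}, Chapter I, Proposition 1.5 (c), which records the standard fact that the only elliptic points of $\mathrm{SL}_2(\mathbb{Z})$ with nontrivial stabilizer beyond $\{\pm I_2\}$ are those of discriminant $-3$ and $-4$. You instead give a self-contained elementary derivation, and every step checks out: the reduction of $\gamma(\tau)=\tau$ to $c\tau^2+(d-a)\tau-b=0$, the disposal of the case $c=0$ using that $\tau\notin\mathbb{R}$, the integrality of the proportionality constant $\mu$ via Gauss's lemma and the primitivity of $p$, the identity $(d-a)^2+4bc=(\mathrm{tr}\,\gamma)^2-4$ from $ad-bc=1$, and the resulting equation $s^2+\mu^2|D_\tau|=4$ whose only solutions with $\mu\neq0$ and $|D_\tau|\geq3$ force $D_\tau\in\{-3,-4\}$. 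What your route buys is transparency: it makes visible exactly where the exceptional discriminants come from (the finitely many lattice points on $s^2+\mu^2|D_\tau|=4$) and keeps the paper free of an external dependency for this step; what the citation buys is brevity. The only cosmetic remark is that the lemma as stated in the paper contains a typo ($\gamma(\tau)=\gamma$ should read $\gamma(\tau)=\tau$), which you have implicitly and correctly interpreted.
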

\begin{proof}
See \cite[Proposition 1.5 (c) in Chapter I]{Silverman}. 
\end{proof}

\begin{theorem}
With the same notations as in \textup{Theorem \ref{main}}, let $p$ be an odd prime, and 
let $R\subset\mathbb{H}(D,\,p)^\pm$
be a set of representatives for the elements of $\mathcal{CM}(D,\,Y(p)^\pm)$. 
If $D\neq -3,\,-4$, then there is a one-to-one correspondence between
\begin{equation*}
\mathrm{Gal}(K_{\mathcal{O},\,p^\infty}(t^{1/p^\infty})/\mathbb{Q}(t))\quad\textrm{and}\quad
R\times\{\gamma\in\mathrm{SL}_2(\mathbb{Z}_p)~|~
\gamma\equiv I_2\Mod{pM_2(\mathbb{Z}_p)}\}.
\end{equation*}					
\end{theorem}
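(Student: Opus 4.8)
The plan is to first re-express the Galois group on the left as an inverse limit of the finite-level CM sets, and then to build an explicit $p$-adic bijection with $R\times H_p$, where $H_p:=\{\gamma\in\mathrm{SL}_2(\mathbb{Z}_p):\gamma\equiv I_2\pmod{pM_2(\mathbb{Z}_p)}\}$ is the congruence kernel. Working with the tower of levels $p^n$, I would combine Lemma \ref{formpoint} and Corollary \ref{GalFormCor}(ii) level by level to obtain bijections $\mathcal{CM}(D,\,Y(p^n)^\pm)\simeq\mathcal{Q}(D,\,p^n)^\pm/\!\sim_{\Gamma(p^n)}\,\simeq\mathrm{Gal}(K_{\mathcal{O},\,p^n}(t^{1/p^n})/\mathbb{Q}(t))$, whose transition maps are surjective by Remark \ref{MNsurjective}. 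Passing to the inverse limit and invoking Lemma \ref{KONinclusion}(i) together with the definition (\ref{defKp}) yields $\mathrm{Gal}(K_{\mathcal{O},\,p^\infty}(t^{1/p^\infty})/\mathbb{Q}(t))\simeq\varprojlim_n\mathcal{CM}(D,\,Y(p^n)^\pm)$, so it suffices to produce a bijection of the latter with $R\times H_p$. A guiding simplification is that primitivity modulo $p^n$ coincides with primitivity modulo $p$, hence $\mathbb{H}(D,\,p^n)^\pm=\mathbb{H}(D,\,p)^\pm$ for all $n$; only the level group $\Gamma(p^n)$ changes, and the fixed set $R$ serves as a common base of representatives at every level.

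Next I would construct the candidate map. Given $([\tau_n])_n$ in the inverse limit, its image under $Y(p^n)^\pm\twoheadrightarrow Y(p)^\pm$ is a class at level $p$; let $\tau\in R$ be its unique representative, giving the $R$-coordinate. For the $\mathrm{SL}_2(\mathbb{Z}_p)$-coordinate, I choose representatives $\tau_n\in\mathbb{H}(D,\,p)^\pm$ of $[\tau_n]$ and write $\tau_n=\gamma_n(\tau)$ with $\gamma_n\in\mathrm{SL}_2(\mathbb{Z})$; since $\tau_n$ and $\tau$ are $\Gamma(p)$-equivalent, one may take $\gamma_n\in\Gamma(p)$, so $\gamma_n\equiv I_2\pmod{pM_2(\mathbb{Z})}$. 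Compatibility of the classes across levels forces $\gamma_{n+1}\equiv\gamma_n\pmod{p^nM_2(\mathbb{Z})}$, so $\{\gamma_n\}$ is Cauchy in $\mathrm{SL}_2(\mathbb{Z}_p)$ and its limit $\gamma$ lies in $H_p$. This defines $([\tau_n])_n\mapsto(\tau,\,\gamma)$.

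The delicate point is that both $\tau_n$ and $\gamma_n$ are determined only up to the isotropy group of $\tau$, which by Lemma \ref{isotropy} equals $\{\pm I_2\}$ precisely because $D\neq-3,\,-4$; a competing set of choices produces $\{\gamma_n'\}$ with $\gamma_n\equiv\gamma_n'\pmod{p^nM_2(\mathbb{Z})}$ or $\gamma_n\equiv-\gamma_n'\pmod{p^nM_2(\mathbb{Z})}$ at each level. This is exactly the hypothesis of Lemma \ref{plemma}, which (using $p$ odd) gives $\lim\gamma_n=\lim\gamma_n'$, so $\gamma$ is independent of all choices and the map is well defined. The same lemma yields injectivity, since two inverse-limit elements sharing $(\tau,\,\gamma)$ have, at each level, representatives agreeing modulo $p^n$ up to sign, forcing $[\tau_n]=\Gamma(p^n)\gamma_n(\tau)$ to coincide. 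For surjectivity, given $(\tau,\,\gamma)$ I approximate $\gamma$ by $\gamma_n\in\Gamma(p)$ with $\gamma_n\equiv\gamma\pmod{p^nM_2(\mathbb{Z})}$ and set $[\tau_n]:=\Gamma(p^n)\gamma_n(\tau)$; the $\Gamma(p)$-invariance of $\mathcal{Q}(D,\,p)^\pm$ guarantees $\gamma_n(\tau)\in\mathbb{H}(D,\,p)^\pm$, while $\gamma_{n+1}\gamma_n^{-1}\in\Gamma(p^n)$ makes the sequence coherent.

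I expect the main obstacle to be precisely the bookkeeping of the $\pm I_2$ ambiguity: establishing that the $p$-adic coordinate is genuinely well defined, rather than merely determined up to sign at each level. Lemma \ref{plemma} is tailored to dissolve this difficulty, the hypothesis $p\geq3$ being exactly what prevents $-I_2$ from being congruent to $I_2$ modulo any power of $p$, so that the two competing limits cannot diverge. A secondary technical point needing care is the naturality of the identifications from Lemma \ref{formpoint} and Corollary \ref{GalFormCor}(ii) with respect to the transition maps, which is what legitimizes passing to the inverse limit in the opening reduction.
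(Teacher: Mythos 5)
Your proposal is correct and follows essentially the same route as the paper: reduce to $\varprojlim_n\mathcal{CM}(D,\,Y(p^n)^\pm)$ via Corollary \ref{GalFormCor}(ii) and Lemmas \ref{formpoint} and \ref{KONinclusion}(i), describe that limit explicitly by $\Gamma(p^n)$-translates of a representative in $R$, and resolve the $\pm I_2$ ambiguity with Lemmas \ref{isotropy} and \ref{plemma}. The only difference is cosmetic --- you construct the map from the inverse limit to $R\times\{\gamma\in\mathrm{SL}_2(\mathbb{Z}_p)\mid\gamma\equiv I_2\Mod{pM_2(\mathbb{Z}_p)}\}$, whereas the paper defines $\phi$ in the opposite direction and proves it bijective.
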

\begin{proof}
For $\tau\in\mathbb{H}^\pm$, let $[\tau]_n$ denote its image in $Y(p^n)^\pm$. We derive that 
\begin{align*}
&\mathrm{Gal}(K_{\mathcal{O},\,p^\infty}(t^{1/p^\infty})/\mathbb{Q}(t))\\
=&\mathrm{Gal}(\bigcup_{n\geq1}K_{\mathcal{O},\,p^n}(t^{1/p^n})/\mathbb{Q}(t))
\quad\textrm{by the definition (\ref{defKp}) }\\
\simeq&\varprojlim_{n\geq1}\,\mathrm{Gal}(K_{\mathcal{O},\,p^n}(t^{1/p^n})/\mathbb{Q}(t))
\quad\textrm{by Lemma \ref{KONinclusion} (i)}\\
\simeq&\varprojlim_{n\geq1}\,\mathcal{CM}(D,\,Y(p^n)^\pm)\quad\textrm{by Corollary \ref{GalFormCor} (ii)}\\
\simeq&
\left\{
([\tau]_1,\,[\tau^{\alpha_1}]_2,\,[\tau^{\alpha_1\alpha_2}]_3,\,
[\tau^{\alpha_1\alpha_2\alpha_3}]_4,\,
\ldots)~|~\tau\in\mathbb{H}(D,\,p)^\pm~
\textrm{and}~
\alpha_n\in\Gamma(p^n)~(n\geq1)
\right\}.
\end{align*}
\par
Define a map
\begin{align*}
&\phi:R\times\{\gamma\in\mathrm{SL}_2(\mathbb{Z}_p)~|~
\gamma\equiv I_2\Mod{pM_2(\mathbb{Z}_p)}\}\\&\hspace{1cm}\rightarrow
\left\{
([\tau]_1,\,[\tau^{\alpha_1}]_2,\,[\tau^{\alpha_1\alpha_2}]_3,\,
[\tau^{\alpha_1\alpha_2\alpha_3}]_4,\,\ldots)~|~\tau\in\mathbb{H}(D,\,p)^\pm
~\textrm{and}~
\alpha_n\in\Gamma(p^n)~(n\geq1)
\right\}
\end{align*}
by
\begin{equation*}
\phi(\tau,\,\gamma)=([\tau^{\gamma_1}]_1,\,[\tau^{\gamma_2}]_2,\,[\tau^{\gamma_3}]_3,\,\ldots)
\quad(\tau\in R,~\gamma\in\mathrm{SL}_2(\mathbb{Z}_p)~\textrm{satisfying}~
\gamma\equiv I_2\Mod{pM_2(\mathbb{Z}_p)}),
\end{equation*}
where $\gamma_n\in\mathrm{SL}_2(\mathbb{Z})$ is any lift such that
\begin{equation*}
\gamma_n\equiv\gamma\Mod{p^n M_2(\mathbb{Z}_p)}\quad(n\geq1).
\end{equation*}
It is straightforward to verify that this map 
is a well-defined surjection, independent of the choice of lifts 
$\gamma_n$.
To prove injectivity, suppose that
\begin{equation*}
\phi(\tau,\,\gamma)=\phi(\tau',\,\gamma')\quad
\textrm{for some}~(\tau,\,\gamma),\,(\tau',\,\gamma')
\in R\times\{\gamma\in\mathrm{SL}_2(\mathbb{Z}_p)~|~
\gamma\equiv I_2\Mod{pM_2(\mathbb{Z}_p)}\}.  
\end{equation*}
Then we have
\begin{equation}\label{twoequalimages}
([\tau^{\gamma_1}]_1,\,[\tau^{\gamma_2}]_2,\,[\tau^{\gamma_3}]_3,\,\ldots)=
([\tau'^{\,\gamma_1'}]_1,\,[\tau'^{\,\gamma_2'}]_2,\,[\tau'^{\,\gamma_3'}]_3,\,\ldots)
\end{equation}
where $\gamma_n$ and $\gamma'_n$ are elements of $\mathrm{SL}_2(\mathbb{Z})$ such that
\begin{equation*}
\gamma_n\equiv\gamma\Mod{p^n M_2(\mathbb{Z}_p)}\quad
\textrm{and}\quad\gamma'_n\equiv\gamma'\Mod{p^n M_2(\mathbb{Z}_p)}\quad(n\geq1), 
\end{equation*}
respectively. 
Since $\gamma_1\equiv\gamma\equiv I_2\Mod{pM_2(\mathbb{Z}_p)}$
and $\gamma_1'\equiv\gamma'\equiv I_2\Mod{pM_2(\mathbb{Z}_p)}$, 
we attain $\gamma_1,\,\gamma_1'\in\Gamma(p)$, and so
\begin{equation*}
[\tau]_1=[\tau^{\gamma_1}]_1=
[\tau'^{\,\gamma_1'}]_1=[\tau']_1.
\end{equation*}
It follows by the definition of $R$ that $\tau=\tau'$, and we obtain by (\ref{twoequalimages}) that
\begin{equation}\label{ggb}
\tau^{\gamma_n}=\tau^{\gamma_n'\beta_n}\quad\textrm{for some}~\beta_n\in\Gamma(p^n)\quad
(n\geq1). 
\end{equation}
Since $D_\tau=D\neq-3,\,-4$, the isotropy group of $\tau$
in $\mathrm{SL}_2(\mathbb{Z})$ is $\{I_2,\,-I_2\}$ by Lemma \ref{isotropy}. Thus we deduce by (\ref{ggb}) that
for each $n\geq1$
\begin{equation*}
\gamma_n\equiv\gamma_n'\Mod{p^n M_2(\mathbb{Z})}\quad\textrm{or}
\quad
\gamma_n\equiv-\gamma_n'
\Mod{p^n M_2(\mathbb{Z})}. 
\end{equation*}
Therefore we achieve by Lemma \ref{plemma} that $\gamma=\gamma'$, which proves that $\phi$ is injective and hence bijective.
\end{proof}

\section*{Statements \& Declarations}

\subsection*{Funding}
The first named author was supported by the National Research Foundation of Korea (NRF) grant funded by the Korea government (MSIT) (No. RS-2023-00252986).
The third named (corresponding) author was supported
by Hankuk University of Foreign Studies Research Fund of 2025 and 
by the National Research Foundation of Korea (NRF) grant funded by the Korea government (MSIT) (No. RS-2023-00241953).

\subsection*{Competing interests}
The authors have not disclosed any competing interests.

\subsection*{Data availability}
Data sharing is not applicable to this article as no datasets
were generated or analysed during the current study.

\bibliographystyle{amsplain}

\address{
Department of Mathematics\\
Dankook University\\
Cheonan-si, Chungnam 31116\\
Republic of Korea} {hoyunjung@dankook.ac.kr}

\address{
Department of Mathematical Sciences \\
KAIST \\
Daejeon 34141\\
Republic of Korea} {jkgoo@kaist.ac.kr}

\address{
Department of Mathematics\\
Hankuk University of Foreign Studies\\
Yongin-si, Gyeonggi-do 17035\\
Republic of Korea} {dhshin@hufs.ac.kr}

\end{document}